\documentclass[12pt,leqno]{amsart}
\usepackage{latexsym,amsmath,amssymb}
\usepackage{graphicx}

\title[Unrectifiability of metric spaces]{On conditions for unrectifiability of a metric space}

\author{Piotr Haj{\l}asz and  Soheil Malekzadeh}

\address{P.\ Haj{\l}asz: Department of Mathematics, University of Pittsburgh, 301
  Thackeray Hall, Pittsburgh, PA 15260, USA, {\tt hajlasz@pitt.edu}}

\address{S. Malekzadeh: Department of Mathematics, University of Pittsburgh, 301
  Thackeray Hall, Pittsburgh, PA 15260, USA, {\tt som13@pitt.edu}}

\thanks{P.H.\ was supported by NSF grant DMS-1161425.}

\setlength{\oddsidemargin}{1pt}
\setlength{\evensidemargin}{1pt}
\setlength{\topmargin}{1pt}       
\setlength{\textheight}{650pt}    
\setlength{\textwidth}{460pt}     

\belowdisplayskip=18pt plus 6pt minus 12pt \abovedisplayskip=18pt
plus 6pt minus 12pt
\parskip 8pt plus 1pt


\def\rank{{\rm rank\,}}

\def\eps{\varepsilon}

\def\vi{\varphi}


\def\H{{\mathcal H}}

\newtheorem{theorem}{Theorem}
\newtheorem{lemma}[theorem]{Lemma}

\newtheorem{proposition}[theorem]{Proposition}


\def\diam{{\rm diam\,}}
\def\dist{{\rm dist\,}}

\def\lip{{\rm Lip\,}}


\theoremstyle{definition}
\newtheorem{remark}[theorem]{Remark}
\newtheorem{definition}[theorem]{Definition}

\newcommand{\barint}{
\rule[.036in]{.12in}{.009in}\kern-.16in \displaystyle\int }

\newcommand{\barcal}{\mbox{$ \rule[.036in]{.11in}{.007in}\kern-.128in\int $}}

\newcommand{\bbbn}{\mathbb N}
\newcommand{\bbbr}{\mathbb R}

\newcommand{\Heis}{\mathbb H}

\def\ap{\operatorname{ap}}

\def\diam{\operatorname{diam}}

\def\dist{\operatorname{dist}}


\def\mvint_#1{\mathchoice
          {\mathop{\vrule width 6pt height 3 pt depth -2.5pt
                  \kern -8pt \intop}\nolimits_{\kern -3pt #1}}%
          {\mathop{\vrule width 5pt height 3 pt depth -2.6pt
                  \kern -6pt \intop}\nolimits_{#1}}%
          {\mathop{\vrule width 5pt height 3 pt depth -2.6pt
                  \kern -6pt \intop}\nolimits_{#1}}%
          {\mathop{\vrule width 5pt height 3 pt depth -2.6pt
                  \kern -6pt \intop}\nolimits_{#1}}}


\numberwithin{theorem}{section} \numberwithin{equation}{section}

\begin{document}

\subjclass[2010]{49Q15, 53C17}
\keywords{geometric measure theory, unrectifiability, metric spaces, 
Sard theorem, Carnot-Carath\'eodory spaces}
\sloppy


\begin{abstract}
We find necessary and sufficient conditions for a Lipschitz map
$f:\bbbr^k\supset E\to X$ into a metric space to satisfy
$\H^k(f(E))=0$. An interesting feature of our approach is that
despite the fact that we are dealing with arbitrary metric spaces,
we employ a variant of the classical implicit function theorem.
Applications include pure unrectifiability of the Heisenberg groups
and that of more general Carnot-Carath\'eodory spaces.
\end{abstract}

\maketitle

\section{Introduction}
\label{introduction}

We say that a metric space $(X,d)$ is {\em countably $k$-rectifiable}
if there is a family of Lipschitz mappings $f_i:\bbbr^k\supset E_i\to X$
defined on measurable sets $E_i\subset\bbbr^k$ such that
$$
\H^k\left(X\setminus\bigcup_{i=1}^\infty f_i(E_i)\right)=0.
$$
A metric space $(X,d)$ is said to be {\em purely $k$-unrectifiable}
if for any Lipschitz mapping $f:\bbbr^k\supset E\to X$,
where $E\subset\bbbr^k$ is measurable we have $\H^k(f(E))=0$.

The theory of rectifiable sets plays a significant role in geometric measure theory and calculus of variations. 
See e.g. \cite{federer, Mattila} for results in Euclidean spaces. 
Recent development of analysis on metric spaces extended this theory to metric spaces. See e.g.
\cite{ambrosiok,ambrosiok2,DS,kirchheim} and references therein.
Considering the importance of this theory, it is reasonable to search for simple geometric conditions 
which would guarantee that the image of a Lipschitz mapping from a subset of a Euclidean space into a metric spaces would have measure zero. 
One of the main results of this paper (Theorem~\ref{main}) establishes such conditions.

Let $f:Z\to (X,d)$ be a mapping between metric spaces and let $\{y_1,\ldots,y_k\}\subset X$ be given.
The mapping $g:Z\to\bbbr^k$ defined by
$$
g(x)=(d(f(x),y_1),\ldots, d(f(x),y_k))
$$
will be called the {\em projection of $f$ associated with the points $y_1,\ldots,y_k$.}

The mapping $\pi:X\to\bbbr^k$, $\pi(y)=(d(y,y_1),\ldots,d(y,y_k))$ is Lipschitz. Since
$g=\pi\circ f$, we conclude that if $f$ is Lipschitz, then its projection
$g=\pi\circ f$ is Lipschitz too.

A measurable function $g:E\to\bbbr$ defined in a measurable set $E\subset\bbbr^k$
is said to be approximately
differentiable at $x\in E$ if there is a measurable set $E_x\subset E$
and a linear function $L:\bbbr^n\to\bbbr$ such that $x$ is a density point of $E_x$ and
$$
\lim_{E_x\ni y\to x} \frac{g(y)-g(x)-L(y-x)}{|y-x|} = 0.
$$
This definition is equivalent with other definitions that one can find in the literature.
The approximate derivative $L$ is unique (if it exists) and it is denoted by $\ap Dg(x)$.
Lipschitz functions $g:E\to\bbbr$ are approximately differentiable a.e. (by the McShane extension and the Rademacher theorem).
In the case of mappings into $\bbbr^k$
approximate differentiability means approximate differentiability of each component.

\begin{theorem}
\label{main}
Let $X$ be a metric space, let $E\subset\bbbr^k$ be measurable, and let $f:E\to X$ be a Lipschitz mapping. Then the following statements are equivalent:
\begin{enumerate}
\item $\H^k(f(E)) = 0$;
\item For any Lipschitz mapping $\vi:X\to \bbbr^k$, we have $\H^k(\vi(f(E))) = 0$;
\item For any collection of distinct points $\{y_1, y_2, \dots, y_k\}\subset X$, the associated projection 
$g:E\to \bbbr^k$ of $f$ satisfies $\H^k(g(E)) = 0$;
\item For any collection of distinct points $\{y_1, y_2, \dots, y_k\}\subset X$, the associated projection 
$g:E\to\bbbr^k $ of $f$ satisfies $ \rank(\ap Dg(x)) < k$ for $\H^k$-a.e. $x\in E$.
\end{enumerate}
\end{theorem}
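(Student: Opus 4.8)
The plan is to prove the cycle of implications $(1)\Rightarrow(2)\Rightarrow(3)\Rightarrow(4)\Rightarrow(1)$, the first two being immediate and the last being the substantial one. For $(1)\Rightarrow(2)$, a Lipschitz map increases $\H^k$ by at most a power of its constant, so $\H^k(\vi(f(E)))\le(\Lip\vi)^k\,\H^k(f(E))=0$. For $(2)\Rightarrow(3)$, the projection of $f$ associated with $y_1,\dots,y_k$ is $g=\pi\circ f$ with $\pi\colon X\to\bbbr^k$, $\pi(y)=(d(y,y_1),\dots,d(y,y_k))$, which is $\sqrt k$-Lipschitz; applying $(2)$ with $\vi=\pi$ gives $\H^k(g(E))=\H^k(\pi(f(E)))=0$.

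For $(3)\Rightarrow(4)$ I would argue by contraposition. Suppose that for some distinct $y_1,\dots,y_k$ the associated projection $g\colon E\to\bbbr^k$ satisfies $\det\ap Dg(x)\ne0$ on a set $A\subset E$ with $\leb^k(A)>0$. Since $g$ is Lipschitz it is approximately differentiable a.e.; decomposing $A$ into countably many pieces on each of which $g$ agrees with a $C^1$ map (a standard Lusin-type theorem) and applying the area formula on each piece yields
\[
\int_{\bbbr^k}\card\bigl(g^{-1}(y)\cap A\bigr)\,dy=\int_A|\det\ap Dg(x)|\,dx>0,
\]
so $\leb^k(g(A))>0$ and hence $\H^k(g(E))\ge\H^k(g(A))=\leb^k(g(A))>0$, contradicting $(3)$.

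The main implication is $(4)\Rightarrow(1)$, which I would again prove by contraposition: assuming $\H^k(f(E))>0$, I must exhibit distinct points $y_1,\dots,y_k\in X$ whose associated projection has approximate derivative of rank $k$ on a set of positive measure. Replacing $X$ by $\overline{f(E)}$ we may assume $X$ separable; fix a countable dense $D\subset X$ with $f(E)\subset\overline D$ and embed $X$ isometrically into $\ell^\infty(D)$ via $y\mapsto(d(y,a))_{a\in D}$, so each component $f_a:=d(f(\cdot),a)\colon E\to\bbbr$ is Lipschitz and $d(f(x),f(x'))=\sup_{a\in D}|f_a(x)-f_a(x')|$. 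Off a null set $N$ every $f_a$ is differentiable and $f$ is metrically differentiable in the sense of Kirchheim, with metric differential a seminorm $\rho_x$ on $\bbbr^k$; the analytic fact I would use is that, after enlarging $N$, $\rho_x(v)\le\sup_{a\in D}|\nabla f_a(x)\cdot v|$ for all $v\in\bbbr^k$ and all $x\in E\setminus N$. Granting this, Kirchheim's metric area formula in the form $\H^k(f(E))\le\int_E\mathbf J_k(\rho_x)\,dx$, where the Jacobian $\mathbf J_k$ of a seminorm is positive exactly when the seminorm is a norm, forces $\rho_x$ to be a genuine norm on a set $A$ with $\leb^k(A)>0$; hence for $x\in A\setminus N$ one has $\sup_{a\in D}|\nabla f_a(x)\cdot v|\ge\rho_x(v)>0$ for every $v\ne0$, i.e.\ $\{\nabla f_a(x):a\in D\}$ spans $\bbbr^k$. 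If, for contradiction, \emph{every} choice of $a_1,\dots,a_k\in D$ had $\det[\nabla f_{a_1}(x),\dots,\nabla f_{a_k}(x)]=0$ for a.e.\ $x$, then — $D$ being countable — there would be a common null set $N'$ with $\operatorname{span}\{\nabla f_a(x):a\in D\}$ of dimension $\le k-1$ for all $x\notin N'$, contradicting the preceding sentence on $A\setminus(N\cup N')$. Therefore some $a_1,\dots,a_k\in D$ (necessarily distinct, else the determinant has a repeated row) satisfy $\rank\ap Dg(x)=k$ on a set of positive measure for the projection $g$ of $f$ associated with $a_1,\dots,a_k$ — precisely the negation of $(4)$.

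The crux is the inequality $\rho_x(v)\le\sup_{a\in D}|\nabla f_a(x)\cdot v|$: the reverse is immediate, but this direction requires converting, at almost every $x$, the merely pointwise-in-$a$ differentiability of the equi-Lipschitz family $\{f_a\}$ into control of the $\sup_a$ of the difference quotients, a step where the classical $C^1$ reasoning is unavailable and one must invoke a metric analogue of the implicit/inverse function theorem. An alternative and essentially equivalent route is to first pass, via Kirchheim's differentiation theorem, to a compact $K\subset E$ of positive measure on which $f$ is bi-Lipschitz, so that $f(K)\subset\ell^\infty(D)$ is near a density point a Lipschitz graph over its approximate tangent $k$-plane, and then to re-express this graph over a coordinate $k$-plane cut out by $k$ of the functions $d(\cdot,a_i)$; carrying this out for an only approximately differentiable graph is exactly the implicit-function-theorem argument and is where the real work lies.
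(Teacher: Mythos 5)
Your handling of $(1)\Rightarrow(2)\Rightarrow(3)$ and of $(3)\Rightarrow(4)$ (Lusin-type $C^1$ approximation plus the area formula for Lipschitz maps $\bbbr^k\to\bbbr^k$) matches the paper and is fine. The gap is in $(4)\Rightarrow(1)$, at the step you yourself call ``the crux'': the inequality $\rho_x(v)\le\sup_{a\in D}|\nabla f_a(x)\cdot v|$ for a.e.\ $x$ is asserted but never proved. This is not a routine verification, because at an individual point it is simply false: take $k=1$, fix $x_0$, and set $f_i(t)=\max(0,|t-x_0|-1/i)$ for $i\in\bbbn$. Then $f=(f_i)$ is $1$-Lipschitz into $\ell^\infty$, every component vanishes near $x_0$ so $\nabla f_i(x_0)=0$ for all $i$, and yet $\Vert f(t)-f(x_0)\Vert_\infty=|t-x_0|$, so the metric differential at $x_0$ is $\rho_{x_0}(v)=|v|$ while $\sup_i|\nabla f_i(x_0)v|=0$. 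Hence the inequality can only hold \emph{almost everywhere}, and establishing that (it is essentially the $w^*$-differentiability theorem of Ambrosio--Kirchheim \cite{ambrosiok}, which identifies $\rho_x$ with the norm of the componentwise derivative a.e.) is work of the same depth as the implication you are proving. The other two ingredients you invoke --- Kirchheim's metric differentiability and the metric area formula $\H^k(f(E))\le\int_E\mathbf{J}_k(\rho_x)\,dx$ --- are correct but are precisely the heavy imported theorems that this paper is explicitly structured to avoid. Granting all three, your countability argument producing distinct $a_1,\dots,a_k$ with $\rank(\ap Dg(x))=k$ on a set of positive measure is sound; without them, the proposal is incomplete at its central step.

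For comparison, the paper's route is elementary and self-contained: it reduces $(4)\Rightarrow(1)$ to a Sard-type theorem for Lipschitz maps $E\to\ell^\infty$ (Theorem~\ref{intulinfty}), proved by (i) Federer's $C^1$-Lusin approximation of each component, (ii) stratifying the critical set by the rank $j$ of the $C^1$ substitute and straightening the first $j$ coordinates with the classical implicit function theorem, (iii) a potential-theoretic diameter estimate (Proposition~\ref{diameter}) controlling the oscillation of $f$ on slices where the derivative vanishes off a set of small measure, and (iv) a $5r$-covering argument giving Hausdorff content of order $m^{j-k}\to 0$. Your closing remark about re-expressing $f(K)$ as a graph over a coordinate plane cut out by $k$ of the distance functions is in spirit the paper's step (ii), but as written it is a plan rather than a proof; to repair the proposal you must either carry that out or supply a proof (or citation) of the a.e.\ identity between the metric differential and the componentwise gradients.
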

Here $\H^k$ stands for the $k$-dimensional Hausdorff measure.
\begin{remark}
It follows from the proof that in conditions (3) and (4) we do not have to consider all families $\{y_1, y_2, \dots, y_k\}\subset X$ of distinct points,
but it suffices to consider such families with points $y_i$ taken from a given countable and dense subset of $f(E)$.
\end{remark}

The implications from (1) to (2) and from (2) to (3) are obvious. 
The equivalence between (3) and (4) easily follows from the classical change of variables formula which states that if
$g:\bbbr^k\supset E\to\bbbr^k$ is Lipschitz, then
\begin{equation}
\label{change}
\int_E |J_g(x)|\, d\H^k(x) = \int_{g(E)} N_g(y,E)\, d\H^k(y).
\end{equation}
Here $J_g$ stands for the Jacobian of $g$ and $N_g(y,E)$ is the number of points in the preimage
$g^{-1}(y)\cap E$, see e.g. \cite{EG,federer,hajlasz2}.
Therefore, it remains to prove the implication (4) to (1) which is the most difficult 
part of the theorem. We will deduce it from another result which deals with Lipschitz mappings into $\ell^\infty$, see Theorem~\ref{intulinfty}.

Note that in general it may happen for a subset $A\subset X$ that
$\H^k(A)>0$, but for all Lipschitz mappings $\vi:X\to\bbbr^k$, $\H^k(\vi(A))=0$. For example
the Heisenberg group $\Heis^n$ satisfies $\H^{2n+2}(\Heis^n)=\infty$, but $\H^{2n+2}(\vi(\Heis^n))=0$
for all Lipschitz mappings $\vi:\Heis^n\to\bbbr^{2n+2}$, see \cite[Section~11.5]{DS}. Hence the implication from
(2) to (1) has to use in an essential way the assumption that $A=f(E)$ is a Lipschitz image of a Euclidean set.
Since by \cite[Section~11.5]{DS} the condition (2) is satisfied for $\Heis^n$ with $k=2n+2$, we conclude that $\Heis^n$ is purely $(2n+2)$-unrectifiable. 
For more general results see Theorem~\ref{pure k unrect} in Section~\ref{heisenberg} and Theorem~\ref{main-v} in Section~\ref{ap}.

Theorem~\ref{main} is related to the work of Kirchheim \cite{kirchheim} and Ambrosio-Kirchheim \cite{ambrosiok} on metric differentiability and the 
general area formula for mappings into arbitrary metric spaces. However, our approach in this paper is elementary and does not involve 
neither the Kirchheim-Rademacher theorem \cite[Theorem 2]{kirchheim} nor any kind of the 
area formula for mappings into arbitrary metric spaces \cite[Theorem 5.1]{ambrosiok}.

Although conditions (3) and (4) are necessary and sufficient for the validity of (1), often it is not easy to verify them.
The problem is that even if $X$ is smooth, the distance  function $y\mapsto d(y,y_i)$ is not smooth at $y_i$
and we need to consider such distance fucntions for $y_i$ from a dense subset of $X$,
thus creating singularities everywhere in $X$. Actually a collection of such distance functions gives an isometric embedding of $X$ into
$\ell^\infty$ (for a more precise statement see Theorem~\ref{intulinfty} and the proof of Theorem~\ref{main} which shows how Theorem~\ref{main} follows from
Theorem~\ref{intulinfty}). In applications we often deal with spaces $X$ that have some sort of smoothness 
(like Heisenberg groups or more general Carnot-Carath\'eodory spaces) and often for such spaces there is a more natural 
Lipschitz mapping $\Phi:X\to\bbbr^N$, than the embedding into $\ell^\infty$, a mapping that takes into account the structure of $X$.
In Section~\ref{carnot} we state a suitable version of Theorem~\ref{main} (Theorem~\ref{main-quasi}) and 
in Section~\ref{ap} we show how it applies to Carnot-Carath\'eodory spaces.

The paper is organized as follows. In Section~\ref{lipschitz-infty} we prove a version of the Sard theorem for Lipschitz mappings into $\ell^\infty$.
We also prove Theorem~\ref{main} as a simple consequence of this result.
In Section~\ref{heisenberg} we provide a new proof of the unrectifiability of the Heisenberg group as
a consequence of Theorem~\ref{main}. In the proof we will encounter a problem with the lack of smoothness of the distance of the function
$y\mapsto d(y,y_i)$.
In Section~\ref{carnot} we will generalize Theorem~\ref{main} in a way that it will easily apply to general Carnot-Carath\'eodory spaces
(including Heisenberg groups). This approach will allow us to avoid singularities of the distance function. Applications will be presented in
Section~\ref{ap}.

Our notation is fairly standard. By $C$ we will denote various positive constant whose value may change in a single string of estimates.
By writing $C=C(k)$ we mean that the constant $C$ depends on $k$ only. $\H^s$ will denote the $s$-dimensional Hausdorff measure.
We will also write $\H^k$ to denote the Lebesgue measure on $\bbbr^k$.
Sometimes in order to emphasize that the Hausdorff measure is defined with respect to a metric $d$ we will write $\H^s_d$.
If $V$ is a Banach space, then $\H^s_V$ denotes the Hausdorff measure with respect to the norm metric of $V$.
By $\H^s_\infty$ we will denote the Hausdorff content which is defined as the infimum of $\sum_{i=1}^\infty r_i^s$
over all coverings by balls of radii $r_i$. Clearly $\H^s_\infty$ is an outer measure and
$\H^s(A)=0$ if and only if $\H^s_\infty(A)=0$. The barred integral will denote the integral average
$\barcal_E f\, d\mu=\mu(E)^{-1}\int_E f\, d\mu$.

\noindent
{\sc Acknowledgements.} We would like to thank J. Pinkman for introducing us to the work of Heisenberg.

\section{Lipschitz mappings into $\ell^\infty$}
\label{lipschitz-infty}

A measurable function coincides with a continuous function outside a set of an arbitrarily small measure.
This is the Lusin property of measurable functions. The following result due to Federer shows a similar $C^1$-Lusin property of 
a.e. differentiable functions, \cite{whitney}.
\begin{lemma}[Federer]
\label{federer}
If $f:\Omega\to\bbbr$ is differentiable a.e.
on an open set $\Omega\subset\bbbr^k$, then for any
$\eps>0$ there is a function $g\in C^1(\bbbr^k)$ such that
$$
\H^k\left(\left\{x\in \Omega:\, f(x)\neq g(x)\right\}\right)<\eps.
$$
\end{lemma}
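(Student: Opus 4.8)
The plan is to reduce the assertion to the Whitney $C^1$-extension theorem. It suffices to construct a closed set $A\subset\bbbr^k$ with $A\subset\Om$ and $\H^k(\Om\setminus A)<\eps$ such that $f|_A$, together with $Df|_A$ as candidate derivative, satisfies the hypotheses of Whitney's theorem; the resulting $g\in C^1(\bbbr^k)$ then agrees with $f$ on $A$, which is exactly what is needed.

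To build $A$, recall that $f$ is measurable (being differentiable a.e.) and that $Df$ is measurable. By Lusin's theorem together with the inner regularity of $\H^k$, there is a closed set $F\subset\bbbr^k$ with $F\subset\Om$, $\H^k(\Om\setminus F)<\eps/2$, such that both $f|_F$ and $Df|_F$ are continuous. For $x\in F$ and $\rho>0$ put
\[
\omega(x,\rho)=\sup\left\{\frac{|f(y)-f(x)-Df(x)(y-x)|}{|y-x|}:\ y\in F,\ 0<|y-x|\le\rho\right\}.
\]
Continuity of $f|_F$ and $Df|_F$ makes $x\mapsto\omega(x,\rho)$ measurable, and $\omega(x,\cdot)$ is nondecreasing; moreover $\omega(x,\rho)\to 0$ as $\rho\to 0^+$ at every $x\in F$ where $f$ is differentiable, hence for $\H^k$-a.e.\ $x\in F$. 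Applying Egorov's theorem (after exhausting $\Om$ by bounded pieces if $\H^k(\Om)=\infty$) we find a measurable $G\subset F$ with $\H^k(F\setminus G)<\eps/4$ on which $\omega(\cdot,1/n)\to 0$ uniformly, and then a closed $A\subset G$ with $\H^k(G\setminus A)<\eps/4$. Thus $A$ is closed, $A\subset\Om$, and $\H^k(\Om\setminus A)<\eps$.

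Finally I would verify Whitney's hypotheses on $A$. Since $A\subset F$, for any $x,y\in A$ with $0<|y-x|\le\rho$,
\[
|f(y)-f(x)-Df(x)(y-x)|\le\omega(x,\rho)\,|y-x|;
\]
by the monotonicity of $\omega(x,\cdot)$ and the uniform convergence $\omega(\cdot,1/n)\to0$ on $G\supset A$ we get $\sup_{x\in K}\omega(x,\rho)\to0$ as $\rho\to0^+$ for every compact $K\subset A$. Hence the first-order Taylor remainder of $f$ on $A$ is $o(|y-x|)$ uniformly on compact subsets of $A$, while $Df|_A$ is continuous; the Whitney $C^1$-extension theorem then yields $g\in C^1(\bbbr^k)$ with $g=f$ on $A$. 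Since $\H^k(\Om\setminus A)<\eps$, this proves the lemma.

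The main obstacle is upgrading differentiability from a pointwise to a uniform statement: ordinary differentiability gives only $\omega(x,\rho)\to0$ for each individual $x$, whereas Whitney's theorem needs the Taylor remainder to be small uniformly over the set on which $f$ is kept unchanged. This is precisely where Egorov's theorem enters, and it is what forces us to pass to a closed subset of slightly smaller measure rather than to argue on all of $\Om$. A minor technical point handled above is the measurability of $\omega(\cdot,\rho)$, which is why we first restrict to a set on which $f$ itself is continuous; the remaining work is bookkeeping, with the real analytic content supplied by the Whitney extension theorem.
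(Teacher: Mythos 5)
Your argument is correct and is precisely the route the paper itself indicates: the paper gives no proof of this lemma, stating only that ``the original proof was based on the Whitney extension theorem,'' and your Lusin--Egorov reduction to a closed set on which the first-order Taylor remainder is uniformly $o(|y-x|)$, followed by the Whitney $C^1$-extension theorem, is exactly that classical proof. The only point deserving a word of care is making the exhaustion step produce a single \emph{closed} set $A$ when $\H^k(\Omega)=\infty$ (e.g.\ by choosing the compact pieces $A_j$ pairwise separated so their union is locally finite), but this is routine bookkeeping and does not affect the validity of the proof.
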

The original proof was based on the Whitney extension theorem; for another, more direct, approach,
see \cite[Theorem~1.69]{MalyZ}.

In particular if $E\subset\bbbr^k$ is measurable and $f:E\to\bbbr$ is Lipschitz, then
$f$ can be extended to a Lipschitz function $\tilde{f}:\bbbr^k\to\bbbr$ (McShane) to which
the above theorem applies. Hence for any $\eps>0$ there is $g\in C^1(\bbbr^k)$ such that
$$
\H^k\left(\left\{x\in E:\, f(x)\neq g(x)\right\}\right)<\eps.
$$
Note that at almost all points of the set where $f=g$ we have that $\ap Df(x)=Dg(x)$.
This holds true at all density points of the set $\{f=g\}$.

Let now $f=(f_1,f_2,\ldots):\bbbr^k\supset E\to\ell^\infty$ be an $L$-Lipschitz mapping.
Then the components $f_i:E\to\bbbr$ are also $L$-Lipschitz. Hence for $\H^k$-almost all points
$x\in E$, all functions $f_i$, $i\in\bbbn$ are approximately differentiable at $x\in E$.
We define the approximate derivative of $f$ componentwise
$$
\ap Df(x)=(\ap Df_1(x),\ap Df_2(x),\ldots).
$$
For each $i\in\bbbn$, $\ap Df_i(x)$ is a vector in $\bbbr^k$ with component bounded by $L$.
Hence $\ap Df(x)$ can be regarded as an $k\times\infty$ matrix of real numbers bounded by $L$, i.e.
$$
\ap Df(x)\in(\ell^\infty)^k,
\qquad
\Vert\ap Df\Vert_\infty\leq L,
$$
where the norm in $(\ell^\infty)^k$ is defined as the supremum over all entries in the
$k\times\infty$ matrix. The meaning of the rank of the $k\times\infty$ matrix $\ap Df(x)$ is clear;
it is the dimension of the linear subspace of $\bbbr^k$ spanned by the vectors $\ap Df_i(x)$, $i\in\bbbn$. Hence
$\rank(\ap Df(x))\leq k$ a.e.

If $f:\Omega\to\ell^\infty$ is Lipschitz, where $\Omega\subset\bbbr^k$ is open,
components of $f$ are differentiable a.e. and we will write $Df(x)$ in place of $\ap Df(x)$.

The next theorem is the main result of this section. It is a crucial step in the remaining implication (4) to (1)
of Theorem~\ref{main}.
The proof of Theorem~\ref{intulinfty} is based on ideas similar to those developed in \cite[Section~7]{BHW}.
\begin{theorem}
\label{intulinfty}
Let $E\subset\bbbr^k$ be measurable and let $f:E\to\ell^\infty$ be a Lipschitz mapping. Then
$\H^k(f(E))=0$ if and only if
$\rank(\ap Df(x))<k$, $\H^k$-a.e. in $E$.
\end{theorem}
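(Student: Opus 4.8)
The statement splits into two directions. The direction "$\H^k(f(E))=0$ $\Longrightarrow$ $\rank(\ap Df(x))<k$ a.e." is the easy one, and I would dispose of it first. The idea is to argue by contradiction: suppose the rank is $k$ on a set $A\subset E$ of positive measure. On $A$ we can choose (measurably, after a decomposition into countably many pieces) a fixed set of $k$ indices $i_1,\dots,i_k$ such that the $k\times k$ minor $M(x)=(\ap Df_{i_1}(x),\dots,\ap Df_{i_k}(x))$ is invertible on a subset of positive measure; composing $f$ with the Lipschitz coordinate projection $\ell^\infty\to\bbbr^k$ onto these coordinates gives a Lipschitz map $h:E\to\bbbr^k$ with $\ap Dh=M$ of full rank on a positive-measure set, hence $\int |J_h|>0$, hence by the change-of-variables formula \eqref{change} $\H^k(h(A))>0$; but $h(A)$ is a Lipschitz image of $f(A)\subset f(E)$, so $\H^k(f(E))=0$ forces $\H^k(h(A))=0$, a contradiction.

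The substantial direction is the converse: assuming $\rank(\ap Df(x))<k$ a.e., show $\H^k(f(E))=0$. Here is the strategy I would follow. First reduce to the case where $E$ is open: extend each component $f_i$ to an $L$-Lipschitz function on $\bbbr^k$ by McShane; it is harmless to work on a bounded open neighborhood, and the rank condition is preserved a.e. on $E$ while on the extension the derivative exists a.e. Next, apply Federer's $C^1$-Lusin lemma (Lemma~\ref{federer}) componentwise together with an exhaustion/diagonalization: for a fixed $\eps>0$ and each $i$ pick $g_i\in C^1(\bbbr^k)$ with $g_i=f_i$ off a set of measure $<\eps 2^{-i}$; then on the closed set $F$ where $f_i=g_i$ for all $i$ (of measure $>|E|-\eps$) we have, at density points, $\ap Df_i = Dg_i$, so the $C^1$ maps $g=(g_1,g_2,\ldots)$ satisfy $\rank(Dg(x))<k$ for a.e. $x\in F$. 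Since $\eps$ is arbitrary and $\H^k$ is subadditive, it suffices to show $\H^k(g(F))=0$ for a single such $F$; moreover, because the Hausdorff content ignores sets of small measure in the domain after further covering, one reduces to proving the statement for $C^1$ maps into $\ell^\infty$ whose derivative has rank $<k$ everywhere on $F$.

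Now I am left with the core geometric claim: if $g:\bbbr^k\supset U\to\ell^\infty$ is $C^1$ and $\rank(Dg(x))\le k-1$ for every $x$ in a compact set $K$, then $\H^k(g(K))=0$. This is where the "variant of the implicit function theorem" advertised in the abstract must enter, and it is the step I expect to be the main obstacle. The Euclidean proof of Sard's theorem for the top stratum (image of critical-rank points has measure zero) uses that locally $g$ factors, up to $C^1$ change of coordinates, through an $(k-1)$-dimensional slice; in $\ell^\infty$ one has no luxury of inverting $g$, but one does have the following: near a point where $\rank Dg(x_0)=r\le k-1$, some $r$ of the components, say $g_{i_1},\dots,g_{i_r}$, have linearly independent differentials, and the map $P=(g_{i_1},\dots,g_{i_r}):\bbbr^k\to\bbbr^r$ is a submersion near $x_0$, so by the classical implicit function theorem its level sets foliate a neighborhood by $(k-r)$-dimensional $C^1$ submanifolds. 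The point is that on each such level set all the independent directions of $g$ are killed to first order, so $g$ restricted to a leaf is "flat": a quantitative estimate (Taylor, using uniform continuity of $Dg$ on $K$) shows $g$ maps a leaf of diameter $\rho$ into a set of diameter $o(\rho)$ — more precisely, covering $K$ by small cubes of side $\rho$, each intersected with a leaf is sent to a set coverable by one ball of radius $\eps(\rho)\rho$ with $\eps(\rho)\to 0$, while transversally the $r\le k-1$ remaining directions contribute a controlled factor, giving a covering of $g(K\cap\text{cube})$ by roughly $\rho^{-r}\cdot\rho^{-(k-r)}$... no: by $(\rho^{-1})^{\,r}$ balls of radius $C\eps(\rho)\rho$ arranged along the $r$-dimensional "image skeleton" together with the vanishing transversal contribution, so that $\H^k_\infty(g(K)) \lesssim \rho^{-k}\cdot(\eps(\rho)\rho)^{k}\to 0$. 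Organizing this covering count so that the exponents work out — i.e. genuinely using $r\le k-1$ so that one saves at least one full factor of $\eps(\rho)$ — is the crux; everything else is compactness, subadditivity, and the reductions above. I would structure the final argument as a clean lemma: a $C^1$ map from an open subset of $\bbbr^k$ into a normed space whose differential has rank $<k$ throughout a compact set sends that set to a null set, proved by the covering estimate just described, and then assemble the theorem from this lemma plus the Federer–Lusin reduction.
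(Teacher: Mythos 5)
Your easy direction and your opening reductions (McShane extension, Federer's $C^1$-Lusin lemma applied componentwise, exhaustion by sets $F$ on which $f=g$ and $\ap Df=Dg$, and the use of the implicit function theorem to straighten the $r$ independent components) all match the paper. The gap is in your ``core geometric claim.'' You reduce to a ``$C^1$ map $g$ into $\ell^\infty$ with $\rank Dg\le k-1$ on a compact set $K$'' and then invoke ``a quantitative estimate (Taylor, using uniform continuity of $Dg$ on $K$)'' to conclude that $g$ maps a leaf of diameter $\rho$ into a set of diameter $o(\rho)$. This is exactly the step that fails for $\ell^\infty$-valued maps, and it is the difficulty the whole paper is organized around: the components $g_i$ are individually $C^1$, but the moduli of continuity of the $Dg_i$ are not uniform in $i$, so $g=(g_1,g_2,\dots)$ is not Fr\'echet differentiable as a map into $\ell^\infty$ and the Taylor remainder, measured in the sup norm over all components, is in general only $O(L\rho)$, not $o(\rho)$. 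A second, related problem: $g$ agrees with $f$ (and even lands in $\ell^\infty$) only on the closed set $F$, and the transversal derivatives vanish only at points of $K_j$, not on a whole leaf; so even for one component you cannot integrate the derivative along a leaf without controlling the \emph{one-dimensional} measure of the leaf's intersection with the complement of $K_j$, which a bound on the $k$-dimensional measure of $E\setminus F$ does not directly provide.

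The paper's substitute for your Taylor step is Proposition~\ref{diameter}: if $h:D\to\ell^\infty$ is $L$-Lipschitz on a bounded convex set $D$ and $Dh=0$ on $A\subset D$, then $\diam h(D)\le C(k)L(\diam D)^k\H^k(D)^{-1}\H^k(D\setminus A)^{1/k}$. This is proved by a Riesz-potential (Morrey-type) estimate applied to each component separately, and it survives the supremum over $i$ because the only quantitative input is the common Lipschitz constant $L$ --- no continuity of the derivative is used. Combined with a density-point choice of the cube, Fubini to select a good slice $\{\rho\}\times[0,d]^{k-j}$ on which the complement of $K_j$ has small $(k-j)$-dimensional measure, and a subdivision into $m^j$ boxes, one covers $f(K_j\cap Q)$ by $m^j$ balls of radius $CLd/m$: the radius is $O(d/m)$, not $o(d/m)$, and the gain comes entirely from the count $m^j$ versus $m^k$, i.e.\ from $m^{j-k}\to 0$. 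Your final bookkeeping $\H^k_\infty(g(K))\lesssim\rho^{-k}(\eps(\rho)\rho)^k$ reflects the same confusion: it presumes $\rho^{-k}$ balls of radius $o(\rho)$, which is the estimate for the ``derivative uniformly small'' case and does not use $r\le k-1$ at all. Unless the Taylor/uniform-continuity step is replaced by an integral estimate of the above kind (exploiting only the Lipschitz bound together with the vanishing of the derivative on a set of nearly full measure), the argument does not close.
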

Before we prove this result we will show how to use it to complete the proof of Theorem~\ref{main}.

\begin{proof}[Proof of Theorem~\ref{main}]
As we already pointed out in the Introduction, it remains to prove the implication (4) to (1).
Although we do not assume that $X$ is separable,
the image $f(E)\subset X$ is separable and hence it can be isometrically embedded into $\ell^\infty$.
More precisely let $\{y_i\}_{i=1}^\infty\subset f(E)$ be a dense subset and let $y_0\in f(E)$.
Then it is well-known and easy to prove that the mapping
$$
f(E)\ni y\mapsto\kappa(y) = \{d(y,y_i)-d(y_i,y_0)\}_{i=1}^\infty\in\ell^\infty
$$
is an isometric embedding of $f(E)$ into $\ell^\infty$. It is the so called Kuratowski embedding.
Clearly
$$
\H^k_d(f(E))=\H^k_{\ell^\infty}((\kappa\circ f)(E)),
$$
where subscripts indicate metrics with respect to which we define the Hausdorff measures.
It remains to prove that
$\H^k_{\ell^\infty}((\kappa\circ f)(E))=0$.
Since
$$
(\kappa\circ f)(x)=\{d(f(x),y_i)-d(y_i,y_0)\}_{i=1}^\infty,
$$
it easily follows from the assumptions that 
$$
\rank (\ap D(\kappa\circ f))<k
\quad
\mbox{$\H^k$-a.e. in $E$.}
$$
Hence (1) follows from Theorem~\ref{intulinfty}.
\end{proof}

Thus it remains to prove Theorem~\ref{intulinfty}. Before doing this
let us make some comments explaining why it is not easy. Theorem~\ref{intulinfty}
is related to the Sard theorem for Lipschitz mappings which states that if
$f:\bbbr^k\to\bbbr^m$, $m\geq k$ is Lipschitz, then
$$
\H^k(f(\{x\in\bbbr^k:\rank Df(x)<k\}))=0.
$$
The standard proof of this fact \cite[Theorem~7.6]{Mattila} is based on the observation that if
$\rank Df(x)<k$, then for any $\eps>0$ there is $r>0$ such that
$$
|f(z)-f(x)-Df(x)(z-x)|<\eps r
\quad
\mbox{for $z\in B(x,r)$}
$$
and hence 
$$
\dist(f(z),W_x)\leq \eps r
\quad
\mbox{for $z\in B(x,r)$,}
$$
where $W_x=f(x)+Df(x)(\bbbr^k)$ is an affine subspace of $\bbbr^m$ of dimension 
less than or equal to $k-1$. 
That means $f(B(x,r))$ is contained in a thin neighborhood of an ellipsoid of dimension
no greater than $k-1$ and hence we can cover it by
$C(L/\eps)^{k-1}$ balls of radius $C\eps r$, where $L$ is the Lipschitz constant of $f$.
Now we use covering by these balls with the help of Vitali's lemma to estimate the 
Hausdorff content of the image of the critical set. For more details, see \cite[Theorem~7.6]{Mattila}.

The proof described above employs the fact that $f$ is Frechet differentiable
and hence this argument {\em cannot} be applied to the case of mappings into $\ell^\infty$,
because in general Lipschitz mappings into $\ell^\infty$ are not Frechet differentiable,
i.e. in general the image of $f(B(x,r)\cap E)$ is not well approximated by the tangent mapping
$\ap Df(x)$. To overcome this difficulty we need to investigate the structure of the set
$\{\ap Df(x)<k\}$ using arguments employed in the proof of the general case of the Sard theorem
for $C^n$ mappings, \cite{sternberg}. In particular we will need to use a version of the implicit function
theorem.

In the proof of Theorem~\ref{intulinfty} we will also need the following result which is of independent interest.
\begin{proposition}
\label{diameter}
Let $D\subset\bbbr^k$ be a bounded and convex set with non-empty interior and let $f:D\to\ell^\infty$ be 
an $L$-Lipschitz mapping. Then
$$
\diam (f(D))\leq C(k)L\frac{(\diam D)^k}{\H^k(D)}\, \H^k(D\setminus A)^{1/k}
$$
where
$$
A=\{x\in D:\, Df(x)=0\}.
$$
In particular if $D$ is a cube or a ball, then
\begin{equation}
\label{image-cube}
\diam (f(D))\leq C(k)L\H^k(D\setminus A)^{1/k}
\end{equation}
\end{proposition}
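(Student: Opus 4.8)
The plan is to reduce the assertion to a pointwise Poincar\'e‑type (Riesz potential) inequality for the real‑valued components of $f$. Write $f=(f_1,f_2,\ldots)$; each $f_i\colon D\to\bbbr$ is $L$‑Lipschitz, hence differentiable $\H^k$‑a.e.\ on $\inter D$, with $|\nabla f_i|\le L$ a.e., and $\H^k(\partial D)=0$ because $D$ is convex. Since $\diam(f(D))=\sup_{p,q\in D}\sup_i|f_i(p)-f_i(q)|=\sup_i\osc_D f_i$, and since $A=\{x:Df(x)=0\}=\bigcap_i\{x:\nabla f_i(x)=0\}$ is contained in each $A_i:=\{x\in D:\nabla f_i(x)=0\}$ — so that $\H^k(D\setminus A_i)\le\H^k(D\setminus A)$ — it suffices to prove
$$
\osc_D g\le C(k)\,L\,\frac{(\diam D)^k}{\H^k(D)}\,\H^k(D\setminus A_g)^{1/k},
\qquad A_g=\{x\in D:\nabla g(x)=0\},
$$
for a single $L$‑Lipschitz $g\colon D\to\bbbr$, and then take the supremum over $i$. (Here $\H^k(D)>0$ because $D$ has non‑empty interior.) The displayed special case then follows at once, since for a cube or a ball $(\diam D)^k/\H^k(D)\le C(k)$.

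To prove the scalar estimate, fix $p\in D$. For $z\in D$ the segment $[p,z]$ lies in $D$ by convexity, and for $\H^k$‑a.e.\ $z$ (by Fubini, in polar coordinates about $p$, applied to the Lebesgue‑null set where $g$ fails to be differentiable) the McShane extension of $g$ is absolutely continuous on $[p,z]$; hence, writing $e_z=(z-p)/|z-p|$,
$$
|g(z)-g(p)|\le\int_0^{|z-p|}\bigl|\nabla g(p+\rho e_z)\bigr|\,d\rho .
$$
Averaging over $z\in D$, passing to polar coordinates $z=p+re$ with $r\in(0,R(e)]$, $e\in S^{k-1}$, where convexity gives $R(e)\le\diam D=:d$, and then interchanging the order of the two nested radial integrations over $\{0\le\rho\le r\le R(e)\}$, we obtain
$$
\Bigl|g(p)-\tfrac1{\H^k(D)}\!\int_D g\Bigr|
\le\frac1{\H^k(D)}\int_{S^{k-1}}\!\!\int_0^{R(e)}\!\! r^{k-1}\!\!\int_0^{r}\!\bigl|\nabla g(p+\rho e)\bigr|\,d\rho\,dr\,d\sigma(e)
\le\frac{d^{k}}{k\,\H^k(D)}\int_D\frac{|\nabla g(w)|}{|w-p|^{k-1}}\,dw ,
$$
the last step using $\int_\rho^{R(e)}r^{k-1}\,dr\le d^{k}/k$, inserting $\rho^{k-1}/\rho^{k-1}$, and returning to Cartesian coordinates $w=p+\rho e$.

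Now $|\nabla g|\le L$ and $\nabla g=0$ on $A_g$, so the last integral is at most $L\int_{D\setminus A_g}|w-p|^{1-k}\,dw$. The kernel $w\mapsto|w-p|^{1-k}$ is radially decreasing about $p$, so among sets $E$ of prescribed measure $\int_E|w-p|^{1-k}\,dw$ is largest when $E$ is the ball centered at $p$ with $\H^k(E)=\H^k(B(p,r_0))$, and a one‑line polar computation gives $\int_{B(p,r_0)}|w-p|^{1-k}\,dw=C(k)\,r_0=C(k)\,\H^k(E)^{1/k}$. Taking $E=D\setminus A_g$ yields $|g(p)-\H^k(D)^{-1}\!\int_D g|\le C(k)\,L\,(\diam D)^k\H^k(D)^{-1}\H^k(D\setminus A_g)^{1/k}$ for every $p\in D$, and the triangle inequality applied to two points $p,q\in D$ gives the asserted bound on $\osc_D g$. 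The delicate point is the passage to the Riesz‑potential inequality in the second step: a careless use of $|g(z)-g(p)|\le\int_{[p,z]}|\nabla g|$ followed by averaging produces a logarithmically divergent integral, and what makes the estimate work is centering the polar coordinates at the evaluation point $p$ and swapping the two radial integrations — this turns the weight $r^{k-1}$ into the harmless factor $R(e)^{k}/k\le(\diam D)^k/k$ while leaving exactly the integrable kernel $|w-p|^{1-k}$. Everything else — the a.e.\ validity of the fundamental theorem of calculus along segments, the rearrangement bound, and the reduction to real‑valued components — is routine.
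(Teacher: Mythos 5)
Your proof is correct and takes essentially the same route as the paper's: reduce to the scalar components, apply the pointwise Riesz--potential bound $|g(p)-g_D|\le \frac{(\diam D)^k}{k\H^k(D)}\int_D|\nabla g(w)|\,|w-p|^{1-k}\,dw$ valid on bounded convex sets, and then the rearrangement estimate $\int_E|w-p|^{1-k}\,dw\le C(k)\H^k(E)^{1/k}$, finishing with the triangle inequality through the mean value $g_D$. The only difference is that you derive the Riesz--potential inequality from scratch via polar coordinates and an interchange of the radial integrations, whereas the paper simply cites it as Lemma~7.16 of \cite{EG}.
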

\begin{proof}
We will need two well-known facts.
\begin{lemma}
\label{2.4}
If $E\subset\bbbr^k$ is measurable, then
$$
\int_E\frac{dy}{|x-y|^{k-1}}\leq C(k)\H^k(E)^{1/k}.
$$
\end{lemma}
\begin{proof}
Let $B=B(x,r)\subset\bbbr^k$ be a ball such that $\H^k(B)=\H^k(E)$. Then
$$
\int_E\frac{dy}{|x-y|^{k-1}}\leq\int_B\frac{dy}{|x-y|^{k-1}} = C(k)r=C'(k)\H^k(E)^{1/k}.
$$
\end{proof}
For the next lemma see for example \cite[Lemma~7.16]{EG}.
\begin{lemma}
If $D\subset\bbbr^k$ is a bounded and convex set with non-empty interior and if
$u:D\to\bbbr$ is Lipschitz continuous, then
$$
|u(x)-u_D|\leq \frac{(\diam D)^k}{k\H^k(D)}\int_D\frac{|\nabla u(y)|}{|x-y|^{k-1}}\, dy
\quad
\mbox{for all $x\in D$,}
$$
where
$u_D=\barcal_D u(x)\, dx$.
\end{lemma}
Now we can complete the proof of Proposition~\ref{diameter}.
If $Df(x)=0$, then $\nabla f_i(x)=0$ for all $i\in\bbbn$.
For each $i\in\bbbn$ we have
\begin{eqnarray*}
|f_i(x)-{f_i}_D|
& \leq &
\frac{(\diam D)^k}{k\H^k(D)}\int_D \frac{|\nabla f_i(y)|}{|x-y|^{k-1}}\, dy 
\leq
\frac{L(\diam D)^k}{k\H^k(D)}\int_{D\setminus A}\frac{dy}{|x-y|^{k-1}} \\
& \leq &
C(k)L\frac{(\diam D)^k}{\H^k(D)}\, \H^k(D\setminus A)^{1/k}.
\end{eqnarray*}
Hence for all $x,y\in D$
$$
|f_i(x)-f_i(y)|\leq |f_i(x)-{f_i}_D| + |f_i(y)-{f_i}_D|
\leq 2C(k)L\frac{(\diam D)^k}{\H^k(D)}\, \H^k(D\setminus A)^{1/k}.
$$
Taking supremum over $i\in\bbbn$ yields
$$
\Vert f(x)-f(y)\Vert_\infty \leq 2C(k)L\frac{(\diam D)^k}{\H^k(D)}\, \H^k(D\setminus A)^{1/k}
$$
and the result follows upon taking supremum over all $x,y\in D$.
\end{proof}

\begin{proof}[Proof of Theorem~\ref{intulinfty}]
The implication from left to right is easy. Suppose that $\H^k(f(E))=0$.
For any positive integers $i_1<i_2<\ldots<i_k$ the projection
$$
\ell^\infty\ni (y_1,y_2,\ldots)\to (y_{i_1},y_{i_2},\ldots,y_{i_k})\in\bbbr^k
$$
is Lipschitz continuous and hence the set
$$
(f_{i_1},\ldots,f_{i_k})(E)\subset\bbbr^k
$$
has $\H^k$-measure zero. It follows from the change of variables formula \eqref{change}
that the matrix $[\partial f_{i_j}/\partial x_\ell]_{j,\ell=1}^k$ of approximate partial
derivatives has rank less than $k$ almost everywhere in $E$. Since this is true for any choice
of $i_1<i_2<\ldots<i_k$, we conclude that $\rank(\ap Df(x))<k$ a.e. in $E$.

Suppose now that $\rank (\ap Df(x))<k$ a.e. in $E$. We need to prove that $\H^k(f(E))=0$.
This implication is more difficult. 
Since $f_i:E\to\bbbr$ is Lipschitz continuous, for any $\eps>0$ there is $g_i\in C^1(\bbbr^n)$ such that
$$
\H^k(\{x\in E:\, f_i(x)\neq g_i(x)\})<\eps/2^i.
$$
Moreover $\ap Df_i(x)=Dg_i(x)$ for almost all points of the set where $f_i=g_i$.
Hence there is a measurable set $F\subset E$ such that $\H^k(E\setminus F)<\eps$ and
$$
f=g
\quad \mbox{and} \quad 
\ap Df(x)=Dg(x)
\quad
\mbox{in $F$}
$$
where
$$
g=(g_1,g_2,\ldots),
\quad
Dg=(Dg_1,Dg_2,\ldots).
$$
It suffices to prove that
$\H^k(f(F))=0$, because we can exhaust $E$ with sets $F$ up to a subset of measure zero and $f$
maps sets of measure zero to sets of measure zero. Let
$$
\tilde{F}=\{x\in F:\, \rank(\ap Df(x))=\rank Dg(x)<k\}.
$$
Since $\H^k(F\setminus\tilde{F})=0$, it suffices to prove that $\H^k(f(\tilde{F}))=0$.
For $0\leq j\leq k-1$, let
$$
K_j=\{x\in\tilde{F}:\rank Dg(x)=j\}.
$$
Since $\tilde{F}=\bigcup_{j=0}^{k-1} K_j$, it suffices to prove that $\H^k(f(K_j))=0$
for any $0\leq j\leq k-1$. Again, by removing a subset of measure zero we can assume that all points 
of $K_j$ are density points of $K_j$. To prove that $\H^k(f(K_j))=0$ we
need to make a change of variables in $\bbbr^k$, but only when $j\geq 1$.

If $x\in \bbbr^k\setminus F$, the sequence $(g_1(x),g_2(x),\ldots)$ is not necessarily bounded.
Let $V$ be the linear space of all real sequences $(y_1,y_2,\ldots)$. Clearly $g:\bbbr^k\to V$.
We do not equip $V$ with any metric structure. Note that
$g|_F:F\to\ell^\infty\subset V$, because $g$ coincides with $f$ on $F$.
\begin{lemma}
\label{changev}
Let $1 \leq j \leq k - 1$ and $x_0 \in K_j$. Then there exists a neighborhood 
$x_0 \in U \subset \bbbr^k $, a diffeomorphism $\Phi: U \subset \bbbr^k \to \Phi(U) \subset \bbbr^k$, and a composition of a translation 
(by a vector from $ \ell^\infty $) with a permutation of variables $\Psi \colon V \to V$ such that
\begin{itemize}
\item $\Phi^{-1}(0) = x_0$ and $\Psi(g(x_0)) = 0$;
\item There is $\eps > 0$ such that for $x = (x_1, x_2, \dots, x_k) \in B(0,\eps) \subset \bbbr^k$ and $i = 1, 2, \dots, j$,
$$
\left( \Psi \circ g \circ \Phi^{-1} \right)_i(x) = x_i,
$$
i.e., $\Psi \circ g \circ \Phi^{-1}$ fixes the first $j$ variables in a neighborhood of $0$.
\end{itemize}
\end{lemma}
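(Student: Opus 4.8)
The plan is to use the classical implicit function theorem applied to the $C^1$ map $g = (g_1, g_2, \dots)$, exploiting that only finitely many components are needed to detect the rank. Since $x_0 \in K_j$ means $\rank Dg(x_0) = j$, the $\infty \times k$ matrix $Dg(x_0)$ has exactly $j$ linearly independent rows; choose indices $i_1 < i_2 < \dots < i_j$ so that $\nabla g_{i_1}(x_0), \dots, \nabla g_{i_j}(x_0)$ are linearly independent in $\bbbr^k$. After composing $g$ on the left with a permutation $\sigma$ of the sequence variables that moves components $i_1, \dots, i_j$ to positions $1, \dots, j$, we may assume the first $j$ gradients $\nabla g_1(x_0), \dots, \nabla g_j(x_0)$ are linearly independent. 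Then the $j \times k$ matrix $[\partial g_i/\partial x_\ell]_{1 \le i \le j, 1 \le \ell \le k}$ has rank $j$, so by relabeling the domain coordinates (another permutation, absorbed into $\Phi$) we may assume the $j \times j$ minor $[\partial g_i/\partial x_\ell]_{i,\ell=1}^j$ is invertible at $x_0$.

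\textbf{Construction of $\Phi$.} Define $\Phi : U \to \bbbr^k$ near $x_0$ by
$$
\Phi(x) = \bigl( g_1(x), \dots, g_j(x), x_{j+1}, \dots, x_k \bigr).
$$
This is a $C^1$ map whose Jacobian at $x_0$ is, in block form, $\begin{pmatrix} M & * \\ 0 & I_{k-j} \end{pmatrix}$ with $M = [\partial g_i/\partial x_\ell]_{i,\ell=1}^j$ invertible; hence $D\Phi(x_0)$ is invertible and, by the inverse function theorem, $\Phi$ is a diffeomorphism from a possibly smaller neighborhood $U$ of $x_0$ onto $\Phi(U)$. Shrinking $U$ we may arrange $\Phi(x_0) = 0$ by a final translation in $\bbbr^k$ (absorbed into $\Phi$), so $\Phi^{-1}(0) = x_0$. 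Now for the full map $\tilde g = \sigma \circ g$ (with the domain-coordinate permutation applied), observe that for $y = \Phi(x) \in \Phi(U)$ we have $\tilde g_i(\Phi^{-1}(y)) = g_i(x) = y_i$ for $i = 1, \dots, j$ by the very definition of the first $j$ coordinates of $\Phi$. Thus $(\tilde g \circ \Phi^{-1})_i(y) = y_i$ for $i \le j$ on all of $\Phi(U)$.

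\textbf{Normalizing the target and finishing.} It remains to set $\Psi : V \to V$ to be the permutation $\sigma$ of sequence variables followed by translation by the $\ell^\infty$ vector $-\,\sigma(g(x_0)) + (\text{already zero in first }j\text{ slots after }\Phi)$; more carefully, since $(\tilde g \circ \Phi^{-1})_i(0) = 0$ for $i \le j$ already, we only need to subtract the tail, and that tail $\{g_i(x_0)\}_{i>j}$ together with the permuted finite block is a bounded sequence because $x_0 \in F$ forces $g(x_0) = f(x_0) \in \ell^\infty$ — so the translation vector lies in $\ell^\infty$, as required. Choosing $\eps > 0$ small enough that $B(0,\eps) \subset \Phi(U)$ gives the displayed identity $(\Psi \circ g \circ \Phi^{-1})_i(x) = x_i$ for $i = 1, \dots, j$ and $x \in B(0,\eps)$, and $\Psi(g(x_0)) = 0$ holds by construction.

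\textbf{Main obstacle.} The only genuine subtlety is bookkeeping: making sure that the permutation of target variables really is a bijection $V \to V$ (it is, being an infinite permutation of coordinates) and that the translation vector lies in $\ell^\infty$ and not merely in $V$ — this is exactly where the hypothesis $x_0 \in F$, hence $g(x_0) = f(x_0) \in \ell^\infty$, is used. Everything else is the standard finite-dimensional implicit/inverse function theorem applied to the finitely many components $g_1, \dots, g_j$; the infinitely many remaining components play no role in the construction of $\Phi$ and are simply carried along.
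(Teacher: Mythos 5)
Your proof is correct and follows essentially the same route as the paper's: both reduce to the invertibility of a $j\times j$ minor via permutations of domain and target coordinates, apply the inverse function theorem to $H(x)=(g_1(x),\dots,g_j(x),x_{j+1},\dots,x_k)$, and obtain $\Psi$ as a permutation composed with translation by $-g(x_0)=-f(x_0)\in\ell^\infty$. The only cosmetic difference is that the paper normalizes $x_0=0$ and $g(x_0)=0$ at the outset, which avoids the small bookkeeping tangle in your last paragraph about which coordinates still need translating (after translating $\Phi$ in the target so that $\Phi(x_0)=0$, the first $j$ components of $\tilde g\circ\Phi^{-1}$ at $0$ equal $g_i(x_0)$, not $0$, so the full vector $-\sigma(g(x_0))$ must be subtracted --- which is what your ``more carefully'' clause ends up doing anyway).
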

\begin{proof}
By precomposing $g$ with a translation of $\bbbr^k$ by the vector $x_0$ and postcomposing it with a translation 
of $V$ by the vector $-g(x_0) = -f(x_0) \in \ell^\infty$ we may assume that $x_0 = 0$ and $g(x_0) = 0$.
A certain $j \times j$ minor of $Dg(x_0)$ has rank $j$. By precomposing $g$ with a permutation of $j$ variables in 
$\bbbr^k$ and postcomposing it with a permutation of $j$ variables in $V$ we may assume that
\begin{equation}
\label{rankofg}
\rank\left[ \frac{\partial g_m}{\partial x_\ell}(x_0) \right]_{1 \leq m , \ell \leq j} = j.
\end{equation}
Let $H:\bbbr^k \to \bbbr^k$ be defined by
$$
H(x) = (g_1(x), \dots, g_j(x), x_{j+1}, \dots, x_k).
$$
It follows from \eqref{rankofg} that $J_H(x_0) \neq 0$ and hence $H$ is a diffeomorphism in a 
neighborhood of $x_0=0 \in \mathbb{R}^k$.
It suffices to observe that for all $i = 1, 2, \dots, j$,
$$
\left( g \circ H^{-1} \right)_i(x) = x_i.
$$
\end{proof}
In what follows, by cubes, we will mean cubes with edges parallel to the coordinate axes in $\bbbr^k$. 
It suffices to prove that any point $x_0\in K_j$ has a cubic neighborhood whose intersection with $K_j$ is mapped onto a set of $\H^k$-measure zero. 
Since we can take cubic neighborhoods to be arbitrarily small, the change of variables from Lemma~\ref{changev} allows us to assume that 
\begin{equation}
\label{star}
K_j\subset (0,1)^k,
\quad
g_i(x)=x_i
\quad
\mbox{for $i=1,2,\ldots,j$ and $x\in [0,1]^k$.}
\end{equation}
Indeed, according to Lemma~\ref{changev} we can assume that $x_0=0$ and that $g$ fixes the first $j$ variables in a neighborhood of $0$.
The neighborhood can be very small, but a rescaling argument allows us to assume that it contains a unit cube $Q$ around $0$. 
Translating the cube we can assume that $Q=[0,1]^k$.
If $x \in K_j$, since $\rank Dg(x) = j$ and $g$ 
fixes the first $j$ coordinates, the derivative of $g$ in directions orthogonal to the first $j$ coordinates equals zero at $x$,
$\partial g_\ell(x)/\partial x_i=0$ for $i=j+1,\ldots,k$ and any $\ell$.
\begin{lemma}
\label{mainlemma}
Under the assumptions \eqref{star}
there exists a constant $C = C(k) > 0$ such that for any integer $m \geq 1$, and every $x \in K_j$, 
there is a closed cube $Q_x\subset [0,1]^k$ with edge length $d_x$
centered at $x$ with the property that 
$f(K_j \cap Q_x) = g(K_j \cap Q_x)$ can be covered by $m^j$ balls in $\ell^\infty$, each of radius $C L d_x m^{-1}$,
where $L$ is the Lipschitz constant of $f$.
\end{lemma}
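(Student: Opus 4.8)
The plan is to split $Q_x$ into $m^j$ rectangular slabs, one over each of the $m^j$ subcubes into which we divide the first $j$ coordinate directions, and to show that $g$ carries the part of $K_j$ inside each slab into a single $\ell^\infty$-ball of radius $C(k)L\,d_x/m$. I will use three things throughout: that (by the standard refinement of Federer's $C^1$-Lusin theorem) the $g_i$ may be taken Lipschitz with constant $\le C(k)L$, so that $g$ restricted to any small cube around a point of $K_j\subset F$ is a $C(k)L$-Lipschitz map into $\ell^\infty$; that every point of $K_j$ is a density point of $K_j$; and the normalization \eqref{star}, under which $g=(x',h)$ on $[0,1]^k$ with $x'=(x_1,\dots,x_j)$, $x''=(x_{j+1},\dots,x_k)$, $h=(g_{j+1},g_{j+2},\dots)$, and $D_{x''}h=0$ at every point of $K_j$. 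We may also assume $L\ge 1$.

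Fixing the cube: since $x=(x',x'')\in K_j\subset(0,1)^k$ is a density point of $K_j$, I choose $d_x>0$ so small that the closed cube $Q_x=q'_x\times I''_x$ of edge $d_x$ centred at $x$ (with $q'_x\subset\bbbr^j$ and $I''_x\subset\bbbr^{k-j}$ cubes of edge $d_x$) lies in $[0,1]^k$ and satisfies $\H^k(Q_x\setminus K_j)\le m^{-k}d_x^k$. Partition $q'_x$ into $m^j$ subcubes $q'_s$ of edge $d_x/m$, giving the slabs $S_s=q'_s\times I''_x$, $s=1,\dots,m^j$, which cover $Q_x$.

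Core estimate on a fixed $S_s$: writing $(K_j)_{y'}=\{z''\in\bbbr^{k-j}:(y',z'')\in K_j\}$, Fubini gives $\int_{q'_s}\H^{k-j}(I''_x\setminus(K_j)_{y'})\,dy'\le\H^k(Q_x\setminus K_j)\le m^{-k}d_x^k$, and since $|q'_s|=(d_x/m)^j$ there is $y'_s\in q'_s$ with $\H^{k-j}(I''_x\setminus(K_j)_{y'_s})\le m^{j-k}d_x^{k-j}$; in particular $(K_j)_{y'_s}\cap I''_x\neq\emptyset$, so fix $z''_s$ in it. Apply Proposition~\ref{diameter} in dimension $k-j$ to the $C(k)L$-Lipschitz map $\psi\colon I''_x\to\ell^\infty$, $\psi(\cdot)=h(y'_s,\cdot)$: since $D\psi=0$ on $(K_j)_{y'_s}\cap I''_x$, the set $A=\{D\psi=0\}$ of the proposition contains it, so by \eqref{image-cube}
$$
\diam\psi(I''_x)\le C(k)L\,\H^{k-j}(I''_x\setminus(K_j)_{y'_s})^{1/(k-j)}\le C(k)L\,(m^{j-k})^{1/(k-j)}d_x=C(k)L\,\frac{d_x}{m}.
$$
Then for any $(p',p'')\in K_j\cap S_s$, the triangle inequality
$$
\|h(p',p'')-h(y'_s,z''_s)\|_\infty\le\|h(p',p'')-h(y'_s,p'')\|_\infty+\|\psi(p'')-\psi(z''_s)\|_\infty
$$
bounds the right side by $C(k)L\sqrt{k}\,d_x/m+\diam\psi(I''_x)\le C(k)L\,d_x/m$ (the first term because $p',y'_s\in q'_s$). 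Since moreover the first $j$ coordinates of $g(p',p'')$, namely $p'$, lie within $d_x/m$ in sup-norm of the centre $c'_s$ of $q'_s$, the set $g(K_j\cap S_s)$ is contained in the $\ell^\infty$-ball of radius $C(k)L\,d_x/m$ centred at $(c'_s,h(y'_s,z''_s))$. Union over $s$ covers $g(K_j\cap Q_x)=f(K_j\cap Q_x)$ by $m^j$ such balls; the case $j=0$ is the same argument with $q'_x$ a point, a single slab $S_1=Q_x$, and $\psi=g=h$.

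The main obstacle is that one cannot apply Proposition~\ref{diameter} directly to $g$ on $Q_x$: for $j\ge 1$ the first $j$ rows of $Dg$ form the identity, so $Dg\neq 0$ everywhere on $K_j$ and the relevant set $A$ is empty. The construction circumvents this by quotienting out those $j$ directions, freezing $x'=y'_s$ where $y'_s$ is chosen — via Fubini applied to the density estimate $\H^k(Q_x\setminus K_j)\le m^{-k}d_x^k$ — so that the slice $(K_j)_{y'_s}$ fills all but an $m^{j-k}d_x^{k-j}$-portion of $I''_x$; this is precisely what makes $\H^{k-j}(I''_x\setminus A)^{1/(k-j)}$ as small as $d_x/m$ and produces the factor $m^{-1}$ required. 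The remaining ingredients are routine: controlling the Lipschitz constant of the $C^1$ approximation by $C(k)L$, the measurability of the slice function (Fubini), and the bookkeeping in the two triangle inequalities above.
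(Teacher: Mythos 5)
Your proof is correct and follows the paper's argument almost step for step: the density cube, the partition of the first $j$ coordinate directions into $m^j$ subcubes, the Fubini selection of a good slice $\{y'_s\}\times I''_x$, the application of Proposition~\ref{diameter} on that slice, and Lipschitz propagation across the whole slab. The one point where you deviate is in how you obtain a globally defined Lipschitz map into $\ell^\infty$ on the slice: the paper extends $f$ componentwise from $Q\cap K_j$ to all of $Q$ by McShane, keeping the constant $L$ and then identifying the approximate derivative of the extension with $Dg$ a.e.\ on the slice, whereas you apply Proposition~\ref{diameter} to the $C^1$ approximation $g$ itself, which requires the quantitative form of Lemma~\ref{federer} (that the approximating functions may be taken with $\Lip g_i\leq C(k)\Lip f_i$, as in Evans--Gariepy); that refinement is standard, so your route is valid, and it has the small advantage that $D\psi=0$ on $(K_j)_{y'_s}\cap I''_x$ is a literal statement about a $C^1$ map rather than an a.e.\ statement about approximate derivatives of an extension, at the cost of invoking more than the paper's Lemma~\ref{federer} actually states.
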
 
The theorem is an easy consequence of this lemma through a standard application of the $5r$-covering lemma, \cite[Theorem~1.2]{heinonen}. 
First of all observe that cubes 
with sides parallel to coordinate axes in $\bbbr^k$ are balls with respect to
the $\ell^\infty_k$ metric
$$
\Vert x-y\Vert_\infty = \max_{1\leq i\leq k}|x_i-y_i|.
$$
Hence the $5r$-covering lemma applies to families of cubes in $\bbbr^k$.
By $5^{-1}Q$ we will denote a cube concentric with $Q$ and with $5^{-1}$ times the diameter.
The cubes $\{ 5^{-1}Q_x\}_{x\in K_j}$ form a covering of $K_j$. Hence we can select disjoint cubes
$\{ 5^{-1}Q_{x_i}\}_{i=1}^\infty$ such that
$$
K_j\subset\bigcup_{i=1}^\infty Q_{x_i}.
$$
If $d_i$ is the edge length of $Q_{x_i}$, then $\sum_{i=1}^\infty (5^{-1}d_i)^k\leq 1$,
because the cubes $5^{-1}Q_{x_i}$ are disjoint and contained in $[0,1]^k$. Hence
$$
\H^k_\infty(f(K_j))\leq
\sum_{i=1}^\infty \H^k_\infty(f(K_j\cap Q_{x_i}))\leq
\sum_{i=1}^\infty m^j(CLd_im^{-1})^k \leq
5^k C^k L^k m^{j-k}.
$$
Since the exponent $j-k$ is negative, and $m$ can be arbitrarily large we conclude that
$\H^k_\infty(f(K_j))=0$ and hence $\H^k(f(K_j))=0$.
Thus it remains to prove Lemma~\ref{mainlemma}.

\begin{proof}[Proof of Lemma~\ref{mainlemma}]
Various constants $C$ in the proof below will depend on $k$ only. Fix an integer $m\geq 1$.
Let $x\in K_j$. Since every point in $K_j$ is a density point of $K_j$, there is a closed cube $Q\subset [0,1]^k$ 
centered at $x$ of edge length 
$d$ such that
\begin{equation}
\label{jeden}
\H^k(Q\setminus K_j)<m^{-k}\H^k(Q)=m^{-k}d^k.
\end{equation}
By translating the coordinate system in $\bbbr^k$ we may assume that
$$
Q=[0,d]^j\times[0,d]^{k-j}.
$$
Each component of $f:Q\cap K_j\to\ell^\infty$ is an $L$-Lipschitz function. Extending each component to
an $L$-Lipschitz function on $Q$ results in an $L$-Lipschitz extension $\tilde{f}:Q\to \ell^\infty$.
This is well-known and easy to check.

Divide $[0,d]^j$ into $m^j$  cubes with pairwise disjoint interiors, each of edge length $m^{-1}d$.
Denote the resulting cubes  by $Q_\nu$, $\nu\in\{1,2,\ldots,m^j\}$. It remains to prove that
$$
f((Q_\nu\times [0,d]^{k-j})\cap K_j)\subset\tilde{f}(Q_\nu\times [0,d]^{k-j})
$$
is contained in a ball (in $\ell^\infty$) of radius $CLdm^{-1}$.
It follows from \eqref{jeden} that
$$
\H^k((Q_\nu\times [0,d]^{k-j})\setminus K_j) \leq \H^k(Q\setminus K_j)<m^{-k}d^k.
$$
Hence
$$
\H^k((Q_\nu\times[0,d]^{k-j})\cap K_j)>(m^{-j}-m^{-k})d^k.
$$
This estimate and the Fubini theorem imply that there is $\rho\in Q_\nu$ such that
$$
\H^{k-j}((\{\rho\}\times[0,d]^{k-j})\cap K_j)>(1-m^{j-k})d^{k-j}.
$$
Hence
$$
\H^{k-j}((\{\rho\}\times[0,d]^{k-j})\setminus K_j)<m^{j-k}d^{k-j}.
$$
It follows from \eqref{image-cube} with $k$ replaced by $k-j$ that
\begin{equation}
\label{dwa}
\diam_{\ell^\infty}( \tilde{f}(\{\rho\}\times[0,d]^{k-j}))\leq
CL\H^{k-j}((\{\rho\}\times[0,d]^{k-j})\setminus K_j)^{1/(k-j)}\leq CLm^{-1}d.
\end{equation}
Indeed, the rank of the derivative of $g$ restricted to the slice 
$\{\rho\}\times[0,d]^{k-j}$ equals zero 
at the points of $(\{\rho\}\times [0,d]^{k-j})\cap K_j$
and this derivative coincides a.e. with the 
approximate derivative of $\tilde{f}$ restricted to $\{\rho\}\times[0,d]^{k-j}\cap K_j$
which by the property of $g$ must be zero as well.

Since the distance of any point in $Q_\nu\times[0,d]^{k-j}$ to
$\{\rho\}\times[0,d]^{k-j}$ is bounded by $Cm^{-1}d$ and $\tilde{f}$ is $L$-Lipschitz, \eqref{dwa}
implies that $\tilde{f}(Q_\nu\times [0,d]^{k-j})$ is contained in a ball of radius $CLdm^{-1}$,
perhaps with a constant $C$ bigger than that in \eqref{dwa}.
The proof of the lemma is complete.
\end{proof}
This also completes the proof of Theorem~\ref{intulinfty}.
\end{proof}

\section{Heisenberg groups}
\label{heisenberg}

As an application we will show one more proof of the well-known result
of Ambrisio-Kircheim \cite{ambrosiok} and Magnani \cite{magnani}
that the Heisenberg group $\Heis^n$ is purely $k$-unrectifiable for $k>n$.
Another proof was given in \cite{BHW} and our argument is related to the one given in 
\cite{BHW} in a sense that the proof of Theorem~\ref{intulinfty} is based on similar ideas.
We will not recall the definition of the Heisenberg group
as this is not the main subject of the paper. The reader may find a detailed
introduction for example in \cite{BHW}; we will follow notation used in that paper.
The following result is well-known, see for example Theorem~1.2 in \cite{BHW}.
\begin{lemma}
\label{low-rank} 
Let $k>n$ and let $E \subset \bbbr^{k}$ be a measurable set. 
If $f \colon E \to \Heis^n$ is locally Lipschitz continuous, then for $\H^k$-almost every point $x \in E$,
$\rank (\ap Df(x)) \leq n$.
\end{lemma}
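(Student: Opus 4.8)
The strategy is to pass to exponential coordinates on $\Heis^n$, record two consequences of the Lipschitz condition — a horizontal one and a first-order contact one — and then deduce the rank bound from the fact that a homogeneous homomorphism from an abelian group into $\Heis^n$ must have isotropic horizontal part. Since the conclusion is local and holds $\H^k$-a.e., I would first decompose $E$ into countably many bounded pieces, so that one may assume $E$ bounded and $f\colon E\to(\Heis^n,d_{cc})$ $L$-Lipschitz with bounded image. Identify $\Heis^n=\bbbr^{2n}_z\times\bbbr_t$ with group law $(z,t)*(z',t')=(z+z',\,t+t'+\tfrac12\omega(z,z'))$, $\omega$ the standard symplectic form (the precise constant is irrelevant), dilations $\delta_r(z,t)=(rz,r^2t)$, and recall that $d_{cc}$ is bi-Lipschitz to the Kor\'anyi gauge $\|(z,t)\|_K=(|z|^4+t^2)^{1/4}$. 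Writing $f=(f^z,f^t)$ and expanding $\|f(a)^{-1}*f(b)\|_K=d_K(f(a),f(b))\le C|a-b|$ gives, for $a,b\in E$,
$$
|f^z(a)-f^z(b)|\le C|a-b|,\qquad
\bigl|f^t(a)-f^t(b)-\tfrac12\omega\bigl(f^z(a),f^z(b)-f^z(a)\bigr)\bigr|\le C|a-b|^2 .
$$
Hence $f^z$ is Euclidean Lipschitz, and combining the two bounds with the boundedness of $f^z$ shows that $f^t$ is Euclidean Lipschitz as well; both are therefore approximately differentiable $\H^k$-a.e.\ on $E$. Fix a density point $x_0\in E$ at which this holds and set $A=\ap Df^z(x_0)\in\bbbr^{2n\times k}$, $b=\ap Df^t(x_0)\in(\bbbr^k)^*$, so that $\ap Df(x_0)=\binom{A}{b}$.

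Next I would extract the first-order contact identity. Because $x_0$ is a density point of $E$, for every $v\in\bbbr^k$ one may pick $p_m\in E$ with $h_m:=|p_m-x_0|\to 0$ and $(p_m-x_0)/h_m\to v$; feeding $a=x_0$, $b=p_m$ into the second inequality, inserting $f^z(p_m)=f^z(x_0)+h_mAv+o(h_m)$ and $f^t(p_m)=f^t(x_0)+h_m\langle b,v\rangle+o(h_m)$, dividing by $h_m$ and letting $m\to\infty$ yields
$$
\langle b,v\rangle=\tfrac12\,\omega\bigl(f^z(x_0),Av\bigr)\qquad\text{for all }v\in\bbbr^k .
$$
In particular $b$ lies in the row space of $A$, so $\rank(\ap Df(x_0))=\rank\binom{A}{b}=\rank A$, and it remains to prove that $\rank A\le n$, i.e.\ that $\im A\subset\bbbr^{2n}$ is isotropic with respect to $\omega$.

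For the isotropy I would invoke Pansu's differentiability theorem: at $\H^k$-a.e.\ $x_0$ the rescalings $\delta_{1/h}\bigl(f(x_0)^{-1}*f(x_0+h\,\cdot\,)\bigr)$ converge, as $h\to 0^+$, to a map $L\colon\bbbr^k\to\Heis^n$ that is a homogeneous group homomorphism. Writing $L(y)=(\Lambda y,\ell(y))$, the homomorphism property gives $\ell(y+y')=\ell(y)+\ell(y')+\tfrac12\omega(\Lambda y,\Lambda y')$, and the homogeneity $L(2y)=\delta_2L(y)$ forces $\ell(2y)=4\ell(y)$, while the first relation with $y'=y$ gives $\ell(2y)=2\ell(y)$; hence $\ell\equiv 0$, and then $\omega(\Lambda y,\Lambda y')=0$ for all $y,y'$. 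Since the horizontal part $\Lambda$ of the Pansu differential is the ordinary derivative $Df^z(x_0)=A$, we get $\omega(Au,Av)=0$ for all $u,v$, so $\im A$ is an isotropic subspace of the symplectic space $(\bbbr^{2n},\omega)$ and hence $\rank A=\dim\im A\le n$; combined with the previous step this gives $\rank(\ap Df(x_0))\le n$. (When $E$ is open one can avoid Pansu's theorem: the contact identity reads $\partial_i f^t=\tfrac12\omega(f^z,\partial_i f^z)$ as an a.e.\ identity of $L^\infty$ functions, and differentiating it distributionally — using that mixed distributional second derivatives of the components of $f^z$ commute while $\omega$ is antisymmetric — forces $\omega(\partial_i f^z,\partial_j f^z)=0$ a.e.) The genuinely delicate point is this isotropy step, and the reason one is pushed toward Pansu's theorem (or toward distributions) rather than a direct pointwise Taylor expansion is a scaling mismatch: the $t$-coordinate has dilation weight $2$, so in a naive second-order expansion of the contact inequality the Euclidean first-order Taylor error $o(h)$ of $f^t$ swamps the quadratic term $h^2\omega(Au,Av)$ one wishes to isolate, and this error cannot be improved to $o(h^2)$ since Lipschitz functions need not be twice differentiable. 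Pansu's theorem circumvents this precisely because it normalizes $f^t$ by $h^{-2}$, the weight adapted to the Heisenberg dilations; establishing the a.e.\ convergence of these blow-ups is the substantial analytic input (classical, and applicable to maps on measurable subsets of $\bbbr^k$ after the usual reduction to density points).
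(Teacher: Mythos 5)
Your proposal is correct in outline, but it does not follow the paper, which in fact offers no proof of this lemma at all: the statement is quoted as ``well-known'' with a pointer to Theorem~1.2 of \cite{BHW}, where the rank bound is obtained from the weak contact equation by an essentially distributional/exterior-derivative argument (the one you sketch parenthetically for open $E$), localized to measurable sets via the $C^1$-Lusin approximation rather than via Pansu's theorem. Your route --- extract from the Kor\'anyi gauge the horizontal Lipschitz estimate and the second-order contact inequality, show at a density point that the vertical row $b$ of $\ap Df(x_0)$ lies in the row space of the horizontal block $A$, and then obtain isotropy of $\operatorname{im}A$ from the fact that a homogeneous homomorphism of an abelian group into $\Heis^n$ has vanishing vertical part --- is instead the route of Magnani \cite{magnani}, which this paper itself invokes in Section~5 for general Carnot groups. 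The BLD/contact-equation route of \cite{BHW} buys independence from Pansu differentiability; yours buys a cleaner algebraic endgame ($\ell(2y)=2\ell(y)$ versus $\ell(2y)=4\ell(y)$, hence $\ell\equiv 0$ and $\omega(\Lambda y,\Lambda y')=0$) and immediate generalization to stratified groups. All the elementary steps check out, including the reduction $\rank\binom{A}{b}=\rank A$ and the bound $\dim\operatorname{im}A\le n$ for an isotropic subspace of $(\bbbr^{2n},\omega)$.

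The one step you should not wave at is the application of Pansu's theorem to a map defined only on a measurable set $E$. There is no McShane-type extension of Lipschitz maps $\bbbr^k\supset E\to\Heis^n$ when $k>n$ (if there were, the unrectifiability theory would largely collapse), so ``the usual reduction to density points'' cannot mean ``extend and apply the open-set statement.'' What you actually need is the a.e.\ \emph{approximate} P-differentiability of Lipschitz maps on measurable subsets of $\bbbr^k$, which is a genuine theorem with its own proof (Magnani establishes it in \cite{magnani}); once that is cited explicitly, your argument closes. Your distributional alternative, as you yourself note, only covers open $E$ and therefore does not by itself prove the lemma as stated.
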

The Heisenberg group $\Heis^n$ is homeomorphic to $\bbbr^{2n+1}$ and the identity mapping
${\rm id}:\Heis^n\to\bbbr^{2n+1}$ is locally Lipschitz continuous. Hence $f$ is locally Lipschitz as a mapping
into $\bbbr^{2n+1}$. The approximate derivative $\ap Df(x)$ is understood as the derivative of the mapping
into $\bbbr^{2n+1}$. As an application of Theorem~\ref{main} we will prove unrectifiability of $\Heis^n$.
\begin{theorem}
\label{pure k unrect}  
Let $k>n$ be positive integers. Let $E \subset \bbbr^k$ be a measurable set, and let $f \colon E \to \Heis^n$ 
be a Lipschitz mapping. Then $\H^k(f(E))=0$.
\end{theorem}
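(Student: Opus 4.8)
The plan is to deduce the theorem from Theorem~\ref{main} by verifying condition~(4). Fix distinct points $y_1,\dots,y_k\in\Heis^n$ and let $g\colon E\to\bbbr^k$, $g(x)=(d(f(x),y_1),\dots,d(f(x),y_k))=\pi(f(x))$ with $\pi(y)=(d(y,y_1),\dots,d(y,y_k))$, be the associated projection of $f$. Since each $y\mapsto d(y,y_j)$ is $1$-Lipschitz on $\Heis^n$ and $f\colon E\to\Heis^n$ is Lipschitz, $g$ is a Lipschitz map $\bbbr^k\supset E\to\bbbr^k$, hence approximately differentiable a.e.; I must show $\rank(\ap Dg(x))<k$ for $\H^k$-a.e.\ $x\in E$, which by Theorem~\ref{main} yields $\H^k(f(E))=0$. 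The only obstruction is the non-smoothness of $d(\cdot,y_j)$, but for the Heisenberg group it is confined to a line: by the classical description of the cut locus in $\Heis^n$, the function $d(\cdot,y_j)$ is smooth on $\Heis^n\setminus L_j$, where $L_j$ is the coset of the center through $y_j$ — a line in $\bbbr^{2n+1}$ parallel to the center axis. Put $N=\bigcup_{j=1}^k L_j$; it is closed, so $f^{-1}(N)$ is measurable, and I treat $E\setminus f^{-1}(N)$ and $f^{-1}(N)$ separately.

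On $E\setminus f^{-1}(N)$ the argument is the standard one. Since $\Heis^n\cong\bbbr^{2n+1}$ and $f$ is locally Lipschitz as a map into $\bbbr^{2n+1}$, the approximate derivative $\ap Df(x)$ of this map exists a.e., and by Lemma~\ref{low-rank} one has $\rank(\ap Df(x))\le n$ for a.e.\ $x$. For such an $x$ with $f(x)\notin N$ the distance map $\pi$ is differentiable at $f(x)$, and a routine chain-rule estimate along the density set realizing the approximate differentiability of $f$ shows that $g$ is approximately differentiable at $x$ with $\ap Dg(x)=D\pi(f(x))\circ\ap Df(x)$; hence $\rank(\ap Dg(x))\le\rank(\ap Df(x))\le n<k$.

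On $f^{-1}(N)$ I bound the image of $g$ instead. Since $g=\pi\circ f$, we have $g(f^{-1}(N))\subseteq\pi(N)$, so it suffices to show $\H^k(\pi(N))=0$, and by finiteness of the union it is enough to prove $\H^k(\pi(L_j))=0$ for each $j$. Parametrize $L_j$ by a Euclidean-affine bijection $p\colon\bbbr\to L_j$ with $p(t_j)=y_j$. By left-invariance $d(p(s),y_j)=c_n|s-t_j|^{1/2}$, which is smooth in $s$ away from $s=t_j$; and for $l\ne j$ the function $s\mapsto d(p(s),y_l)$ is smooth off $p^{-1}(L_l)$, which is empty (when $L_l\ne L_j$) or a single point (when $L_l=L_j$). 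Thus $\pi\circ p\colon\bbbr\to\bbbr^k$ is continuous and smooth off a finite set $S\subset\bbbr$. Since $k\ge2$ (because $k>n\ge1$), the restriction of $\pi\circ p$ to each of the finitely many open intervals composing $\bbbr\setminus S$ is a smooth, hence locally Lipschitz, curve in $\bbbr^k$ and therefore has $\H^k$-measure zero; adding the finitely many points $(\pi\circ p)(S)$ gives $\H^k(\pi(L_j))=\H^k((\pi\circ p)(\bbbr))=0$. Consequently $\H^k(g(f^{-1}(N)))=0$, and applying the change of variables formula~\eqref{change} to $g$ on the measurable set $f^{-1}(N)$ yields $\int_{f^{-1}(N)}|J_g|\,d\H^k=\int_{g(f^{-1}(N))}N_g(y,f^{-1}(N))\,d\H^k(y)=0$, so $J_g=0$, i.e.\ $\rank(\ap Dg(x))<k$, for a.e.\ $x\in f^{-1}(N)$.

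Combining the two cases, $\rank(\ap Dg(x))<k$ for $\H^k$-a.e.\ $x\in E$; since $y_1,\dots,y_k$ were arbitrary distinct points, condition~(4) of Theorem~\ref{main} holds and hence $\H^k(f(E))=0$. The main obstacle is precisely the set $f^{-1}(N)$, where $f$ lands in the cut locus of some $y_j$ and the chain rule is unavailable: controlling $g$ there forces us to use the global geometric fact that this singular set is one-dimensional, so that its image under the merely Hölder map $\pi$ stays negligible in $\bbbr^k$. This ad hoc device is exactly what the more robust framework developed in Sections~\ref{carnot}--\ref{ap} is designed to circumvent.
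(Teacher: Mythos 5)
Your proof is correct, but it takes a genuinely different route from the paper's. The paper does not work with the Carnot--Carath\'eodory distance at all: it replaces $d$ by the bi-Lipschitz equivalent Kor\'anyi metric $d_K$, whose distance function $z\mapsto d_K(z,y)$ is given by an explicit gauge and is $C^\infty$ on $\bbbr^{2n+1}\setminus\{y\}$. The singular set of the projection $\pi$ is then just the $k$ points $y_1,\dots,y_k$, and on $E_i=\{x: f(x)=y_i\}$ the paper simply notes that the $i$-th component of $g$ vanishes identically, so $g$ agrees there with $\pi_i\circ f$ where $\pi_i$ has a $0$ in the $i$-th slot and is smooth near $y_i$; the chain rule then applies everywhere. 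You instead keep the CC metric, which forces you to import the nontrivial (though classical) fact that $d_{cc}(\cdot,y)$ is smooth off the vertical line $L_y$, enlarging the singular set from $k$ points to $k$ lines; you then dispose of $f^{-1}(N)$ by the nice observation that $\pi(N)$ is a finite union of piecewise-smooth curves in $\bbbr^k$ with $k\ge 2$, hence $\H^k$-null, and by running the change-of-variables formula backwards to force $J_g=0$ a.e.\ there. Both arguments are sound; the paper's is more elementary and self-contained (no geodesic/cut-locus analysis), while yours avoids introducing the Kor\'anyi gauge at the cost of a heavier external input and a longer treatment of the singular set. One small slip: when $L_l=L_j$ the set $p^{-1}(L_l)$ is all of $\bbbr$, not a single point; what you mean is that $s\mapsto d(p(s),y_l)=c_n|s-t_l|^{1/2}$ fails to be smooth only at the single parameter $s=t_l$. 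The conclusion that $\pi\circ p$ is smooth off a finite set is unaffected.
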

Here the Hausdorff measure in $\Heis^n$ is with respect to the Carnot-Carath\'eodory metric or with respect to 
the Kor\'anyi metric $d_K$ which is bi-Lipschitz equivalent to the Carnot-Carath\'eodory one.
\begin{proof}
Let $f:\bbbr^k\supset E\to\Heis^n$, $k>n$ be Lipschitz. We need to prove that $\H^k(f(E))=0$.
Recall that by Lemma~\ref{low-rank}, $\rank (\ap Df(x)) \leq n$.
Fix a collection of $k$ distinct points $y_i,\ldots,y_k$ in $\Heis^n$ and define the mapping
$g:\bbbr^k\supset E\to\bbbr^k$ as the projection of $f$
$$
g(x)=(d_K(f(x),y_1),\ldots,d_K(f(x),y_k)).
$$
The mapping $\pi:\Heis^n\to\bbbr^k$ defined by
$\pi(z)=(d_K(z,y_1),\ldots,d_K(z,y_k))$ is Lipschitz continuous, but it is not Lipschitz as a mapping
$\pi:\bbbr^{2n+1}\to\bbbr^k$. Hence it is not obvious that we can apply the chain rule to $g=\pi\circ f$ and conclude that
$\rank(\ap Dg(x))\leq n<k$ a.e. in $E$ which would imply $\H^k(f(E))=0$ by Theorem~\ref{main}.
To overcome this difficulty we use the fact that 
the Kor\'anyi metric $\bbbr^{2n+1}\ni z\mapsto d_K(z,y)\in\bbbr$ is $C^\infty$ on $\bbbr^{2n+1}\setminus \{ y\}$.
Hence the chain rule applies to $g=\pi\circ f$ on the set
$E\setminus(\bigcup_{i=1}^k E_i)$, where
$$
E_i=\{x\in E:\, f(x)=y_i\}
$$
and $\rank (\ap Dg(x))\leq n<k$ a.e. in $E\setminus(\bigcup_{i=1}^k E_i)$. If $x\in E_i$, then 
$f(x)\neq y_j$ for $j\neq i$ and 
$$
g(x)=(d_K(f(x),y_1),\ldots,d_K(f(x),y_{i-1}),0,d_K(f(x),y_{i+1}),\ldots,d_K(f(x),y_k)),
\quad
\mbox{for $x\in E_i$}.
$$
Thus $g=\pi_i\circ f$ on $E_i$, where
$$
\pi_i(z)=(d_K(z,y_1),\ldots,d_K(z,y_{i-1}),0,d_K(z,y_{i+1}),\ldots,d_K(z,y_k)).
$$
The function $\pi_i$ is smooth in a neighborhood of $y_i=f(x)$, $x\in E_i$ and hence the chain rule shows that
the approximate derivative of $g|_{E_i}$ has rank less than or equal $n<k$ a.e. in $E_i$. It remains to observe that
at almost all points of $E_i$ the approximate derivative of $g$ equals to that of $g|_{E_i}$.
\end{proof}

\section{Generalization of Theorem~\ref{main}}
\label{carnot}

\begin{definition}
We say that a metric space $(X,d)$ is {\em quasiconvex} if there is a constant $M\geq 1$ such that any two points
$x,y\in X$  can be connected by a curve $\gamma$ of length $\ell(\gamma)\leq Md(x,y)$.
\end{definition}
The next result is a variant of Theorem~\ref{main}.
\begin{theorem}
\label{main-quasi}
Suppose that $(X,d)$ is a complete and quasiconvex metric space and that $\Phi:X\to\bbbr^N$ is a Lipschitz
map with the property that for some constant $C_\Phi>0$ and all rectifiable curves $\gamma$ in $X$ we have
\begin{equation}
\label{BLD}
\ell(\gamma)\leq C_\Phi\ell(\Phi\circ\gamma).
\end{equation}
Then for any $k\geq 1$ and any Lipschitz map $f:\bbbr^k\supset E\to X$ defined on a measurable set $E\subset \bbbr^k$
the following conditions are equivalent.
\begin{enumerate}
\item $\H^k(f(E))=0$ in $X$;
\item $\H^k(\Phi(f(E)))=0$ in $\bbbr^N$;
\item $\rank (\ap D(\Phi\circ f))<k$, $\H^k$-a.e. in $E$.
\end{enumerate}
\end{theorem}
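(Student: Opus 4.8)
The plan is to establish the chain (1)$\Rightarrow$(2), (2)$\Leftrightarrow$(3), (3)$\Rightarrow$(1). The first is trivial since $\Phi$ is Lipschitz. The equivalence (2)$\Leftrightarrow$(3) is classical: $\Phi\circ f:E\to\bbbr^N$ is Lipschitz, and applying the change of variables formula \eqref{change} to every $k\times k$ subdeterminant of its approximate Jacobian (after a McShane extension of the components), together with the Lipschitz Sard theorem recalled above, shows that $\H^k((\Phi\circ f)(E))=0$ iff $\rank(\ap D(\Phi\circ f)(x))<k$ for $\H^k$-a.e.\ $x\in E$; alternatively one includes $(\bbbr^N,\|\cdot\|_\infty)$ isometrically into $\ell^\infty$ and quotes Theorem~\ref{intulinfty}, using that the Euclidean and $\ell^\infty$ norms on $\bbbr^N$ are bi-Lipschitz and that the rank of the Jacobian is unaffected. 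Thus everything reduces to (3)$\Rightarrow$(1).

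For (3)$\Rightarrow$(1) I would verify condition (4) of Theorem~\ref{main}: it suffices to show that for every collection of distinct points $y_1,\dots,y_k\in X$ the associated projection $g(x)=(d(f(x),y_1),\dots,d(f(x),y_k))$ satisfies $\rank(\ap Dg(x))<k$ for $\H^k$-a.e.\ $x\in E$. Since the rows of $\ap Dg(x)$ are the covectors $\ap Dh_{y_i}(x)$, where $h_y:=d(f(\cdot),y)$, this follows once we prove the following statement, which is the heart of the matter:
\begin{quote}
\emph{Key Lemma.} For every $y\in X$ one has $\ker\big(\ap D(\Phi\circ f)(x)\big)\subseteq\ker\big(\ap Dh_y(x)\big)$ for $\H^k$-a.e.\ $x\in E$.
\end{quote}
Indeed, granting it, a.e.\ every row of $\ap Dg(x)$ lies in the row space of $\ap D(\Phi\circ f)(x)$, which by (3) has dimension $<k$; hence $\rank(\ap Dg(x))<k$ a.e., and Theorem~\ref{main} gives $\H^k(f(E))=0$.

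To prove the Key Lemma I would first establish a metric speed estimate. By inner regularity of $\H^k$ we may assume $E$ is compact (this shrinks $f(E)$ by a set of arbitrarily small measure and does not affect approximate derivatives a.e.); let $L$ be a Lipschitz constant of $\Phi\circ f$ and of each $h_y$. Fixing a unit vector $v_0\in\bbbr^k$, the claim is that for $\H^k$-a.e.\ $x\in E$,
\[
\limsup_{t\to 0^+,\ x+tv_0\in E}\ \frac{d\big(f(x+tv_0),f(x)\big)}{t}\ \le\ C_\Phi\,\big|\ap D(\Phi\circ f)(x)\,v_0\big|.
\]
By Fubini along lines of direction $v_0$ one selects $x$ which is a one-dimensional Lebesgue density point of $E$ on the line $L_{x,v_0}=\{x+sv_0\}$, is a Lebesgue point along $L_{x,v_0}$ of $s\mapsto\ap D(\Phi\circ f)(x+sv_0)v_0$, and at which the restriction of $\Phi\circ f$ to $E\cap L_{x,v_0}$ is differentiable along the line with derivative $\ap D(\Phi\circ f)(x)v_0$. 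For small $t>0$ with $x+tv_0\in E$, build a rectifiable curve $\gamma_t$ in $X$ from $f(x)$ to $f(x+tv_0)$ that follows $s\mapsto f(x+sv_0)$ over $E\cap[x,x+tv_0]$ and, over each complementary interval of $E$ in that segment, runs along a curve supplied by quasiconvexity; these bridging pieces can be taken so that the length of their $\Phi$-image is $\le C\,|[x,x+tv_0]\setminus E|$, $C$ independent of $t$. Then \eqref{BLD} gives
\[
d\big(f(x+tv_0),f(x)\big)\le\ell(\gamma_t)\le C_\Phi\,\ell(\Phi\circ\gamma_t)\le C_\Phi\int_{E\cap[x,x+tv_0]}\big|\ap D(\Phi\circ f)(x+sv_0)v_0\big|\,ds\ +\ C\,\big|[x,x+tv_0]\setminus E\big|,
\]
and dividing by $t$ and letting $t\to0^+$ yields the estimate, the first term tending to $|\ap D(\Phi\circ f)(x)v_0|$ and the second to $0$. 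To deduce the Key Lemma, suppose it fails for some $y$; then on a set $S$ of positive measure one may choose a unit vector $w(x)\in\ker(\ap D(\Phi\circ f)(x))$ with $|\ap Dh_y(x)w(x)|>\delta$ for a fixed $\delta>0$. Covering $S^{k-1}$ by finitely many sets of diameter $<\delta/(8L\max\{1,C_\Phi\})$, the pigeonhole principle produces a fixed unit vector $v_0$ and a positive-measure $S'\subseteq S$ with $|v_0-w(x)|$ that small on $S'$; since $\Phi\circ f$ and $h_y$ are $L$-Lipschitz this gives $|\ap D(\Phi\circ f)(x)v_0|<\delta/(8\max\{1,C_\Phi\})$ and $|\ap Dh_y(x)v_0|>7\delta/8$ on $S'$. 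But for a.e.\ $x\in S'$ the metric speed estimate together with $|h_y(x+tv_0)-h_y(x)|\le d(f(x+tv_0),f(x))$ forces $|\ap Dh_y(x)v_0|\le C_\Phi|\ap D(\Phi\circ f)(x)v_0|<\delta/8$, a contradiction.

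The main obstacle is the metric speed estimate. When $E\cap L_{x,v_0}$ is only measurable it may be nowhere dense while of full one-dimensional density at $x$, so no subinterval of the line need lie in $E$; the connecting curve $\gamma_t$ must then be assembled by bridging the (measure-theoretically negligible) complementary intervals with near-geodesics of $X$, and this is precisely where completeness and quasiconvexity of $X$ are used in an essential way — along with some care in estimating the length of $\Phi\circ\gamma_t$ over the gaps. Once this estimate is in hand, the pigeonhole/$\varepsilon$-room argument and the reduction through Theorem~\ref{main} are routine.
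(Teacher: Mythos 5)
Your proposal is correct in outline, but for the hard implication (3)$\Rightarrow$(1) it takes a genuinely different route from the paper. The paper does not pass back through Theorem~\ref{main} at all: it repeats the covering scheme of Theorem~\ref{intulinfty} directly in $X$ --- Lusin-type $C^1$ approximation of $\Phi\circ f$, stratification by the rank sets $K_j$, the change of variables of Lemma~\ref{changev}, and the cube-counting Lemma~\ref{ml2} --- and the only genuinely new ingredient is the diameter bound \eqref{diamX}, proved by joining two points of a $(k-j)$-dimensional slice by a broken line meeting the complement of $K_j$ in a set of small length (the integral-geometric Lemma~\ref{si}), extending $f$ along that line by completeness and quasiconvexity, and applying \eqref{BLD} to the extension, whose $\Phi$-image has zero derivative a.e.\ on the good set. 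You instead reduce to condition (4) of Theorem~\ref{main} via the pointwise kernel inclusion $\ker(\ap D(\Phi\circ f)(x))\subseteq\ker(\ap Dh_y(x))$, proved by a metric speed estimate along segments; your bridging construction (follow $f$ on $E\cap[x,x+tv_0]$, fill the gaps by quasiconvex arcs, bound $\ell(\Phi\circ\gamma_t)$ by the integral of $|\ap D(\Phi\circ f)v_0|$ over the good set plus the measure of the bad set, then invoke \eqref{BLD}) is the same mechanism as the paper's, but deployed along a single segment at a density point rather than in a slice, which lets you dispense with Lemma~\ref{si} entirely. What your route buys: it reuses the already-proven Theorem~\ref{main} instead of redoing the covering argument, and the speed estimate is a clean, quotable statement that also explains why the non-smoothness of $y\mapsto d(y,y_i)$ --- the obstacle discussed in Section~\ref{heisenberg} --- is harmless here. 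What the paper's route buys: it is self-contained relative to the proof of Theorem~\ref{intulinfty} and avoids the Kuratowski embedding hidden inside Theorem~\ref{main}. Two small points to tighten in a full write-up: in the pigeonhole step the sets $\{x\in S:\,w(x)\in U_j\}$ need only have positive \emph{outer} measure (measurability of $x\mapsto w(x)$ is not required, since the contradiction is pointwise outside a genuine null set); and the identification of the derivative of $\Phi\circ\gamma_t$ on $E\cap L_{x,v_0}$ with $\ap D(\Phi\circ f)(x+sv_0)v_0$ holds only for a.e.\ line in the fixed direction $v_0$, so your Fubini selection of good points $x$ must include this condition --- both are routine, and $v_0$ ranges over a finite set so the exceptional sets cause no trouble.
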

Since the set $f(E)$ is separable, $\H^k(f(E))=0$ if and only if every 
point in the set $f(E)$ has a neighborhood whose intersection with $f(E)$ has measure zero. 
This also implies that a local version of Theorem~\ref{main-quasi} is true: We can assume that
the space is quasiconvex in a neighborhood of each point, that $\Phi$ is locally Lpschitz 
continuous and that for each $x\in X$ there is a neighborhood $x\in U\subset X$ and a constant $C_{\Phi,U}$
such that \eqref{BLD} holds for all rectifiable curves $\gamma$ in $U$ with the constant $C_{\Phi,U}$.
The reader will have no problem to state a suitable version of the theorem. 

In the proof of Theorem~\ref{main} we embedded $f(E)$ isometrically into $\ell^\infty$ and we concluded the result from 
Theorem~\ref{intulinfty}. Here instead of the isometric embedding into $\ell^\infty$ we have the mapping $\Phi$.
The proof of Theorem~\ref{main-quasi} is similar to that of Theorem~\ref{intulinfty}
and for that reason our arguments will be sketchy, but an essential difficulty arises in the proof of the
counterpart of the estimate \eqref{dwa}. One of the reasons for this difficulty is that unlike $\ell^\infty$,
the space $X$ does not necessarily have the Lipschitz extension property and we cannot extend $f$ from 
$Q\cap K_j$ to a Lipschitz mapping $\tilde{f}:Q\to X$; we will need a slightly different argument and 
this part of the proof will be furnished with all the necessary details.

\begin{proof}[Proof of Theorem~\ref{main-quasi}]
The implication from (1) to (2) is obvious. If $N<k$, the equivalence between (2) and (3) is also obvious, so we
can assume that $N\geq k$. In that case the equivalence between (2) and (3) follows from the area formula
which generalizes \eqref{change} to the case when the target space may have larger dimension than the domain: 
If $h:\bbbr^k\supset E\to\bbbr^N$ is Lipschitz, then
$$
\int_E |J_h(x)|\, d\H^k(x)=\int_{h(E)} N_h(y,E)\, d\H^k(y),
$$
\cite{EG,federer}, and the observation that $|J_h(x)|=0$ if and only if $\rank(\ap Dh(x))<k$.
It remains to prove that (3) implies (1). Suppose that
$\rank(\ap D(\Phi\circ f))<k$ a.e. in $E$. For any $\eps>0$ there is a set $F\subset E$ and a mapping
$g=(g_1,\ldots,g_N)\in C^1(\bbbr^k,\bbbr^N)$ such that $\H^k(E\setminus F)<\eps$
and
$$
g=\Phi\circ f,
\quad
Dg=\ap D(\Phi\circ f),
\quad
\rank Dg<k
\quad
\mbox{on $F$.}
$$
Since $F=\bigcup_{j=0}^{k-1} K_j$, where
$$
K_j=\{x\in F:\rank Dg(x)=j\},
$$
it suffices to show that $\H^k(f(K_j))=0$. By removing a subset of measure
zero we can assume that all points of $K_j$ are the density points of $K_j$.
Since the problem is local in the nature using a variant of Lemma~\ref{changev} 
we can assume that 
\begin{equation}
\label{star2}
K_j\subset (0,1)^k,
\quad
g_i(x)=x_i
\quad
\mbox{for $i=1,2,\ldots,j$ and $x\in [0,1]^k$.}
\end{equation}
Now the result will follow from the following version of
Lemma~\ref{mainlemma}.
\begin{lemma}
\label{ml2}
Under the assumption \eqref{star2}
there is a constant $C=C(k)C_\Phi M\lip(\Phi)>0$
such that for any integer $m\geq 1$, and any  $x\in K_j$,
there is a closed cube $Q_x\subset [0,1]^k$ centered at $x$
of edge length $d_x$ such that $f(K_j\cap Q_x)$
can be covered by $m^j$ balls in $X$, each of radius $CLd_xm^{-1}$, where $L$ is the Lipschitz constant of $f$.
\end{lemma}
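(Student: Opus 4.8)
The plan is to mimic the proof of Lemma~\ref{mainlemma}, keeping track of where the two special features of $X$ — quasiconvexity and the condition \eqref{BLD} — enter, and replacing the one step that genuinely breaks down, namely the use of an $L$-Lipschitz extension $\tilde f:Q\to\ell^\infty$. Fix $m\geq 1$ and $x\in K_j$. Since $x$ is a density point of $K_j$, pick a closed cube $Q=Q_x\subset[0,1]^k$ centered at $x$ of edge length $d=d_x$ with $\H^k(Q\setminus K_j)<m^{-k}d^k$, and by translating assume $Q=[0,d]^j\times[0,d]^{k-j}$. Subdivide the first $j$ factors $[0,d]^j$ into $m^j$ cubes $Q_\nu$ of edge length $m^{-1}d$; it suffices to show that each $f\big((Q_\nu\times[0,d]^{k-j})\cap K_j\big)$ lies in a ball of $X$ of radius $CLdm^{-1}$ with $C=C(k)C_\Phi M\lip(\Phi)$. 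As in the original argument, the measure estimate plus Fubini produces a slice parameter $\rho\in Q_\nu$ with $\H^{k-j}\big((\{\rho\}\times[0,d]^{k-j})\setminus K_j\big)<m^{j-k}d^{k-j}$.

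\textbf{The main obstacle and how to handle it.} The hard part, as the authors warn, is the analogue of \eqref{dwa}: bounding the diameter (in $X$) of the image of the slice $\{\rho\}\times[0,d]^{k-j}$ under something playing the role of $\tilde f$. Since $X$ has no Lipschitz extension property, I cannot extend $f$ off $K_j$. Instead I would argue directly on the slice. On the set $S_\rho:=(\{\rho\}\times[0,d]^{k-j})\cap K_j$ we know, from \eqref{star2} and $\rank Dg(x)=j$ with $g$ fixing the first $j$ coordinates, that $Dg$ vanishes in the $k-j$ slice directions at every point of $S_\rho$; hence the approximate derivative of the map $t\mapsto g(\rho,t)=(\Phi\circ f)(\rho,t)$ is zero a.e.\ on the slice, so $\Phi\circ f$ is ``approximately constant'' along $S_\rho$. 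To convert this into a genuine diameter bound in $X$, I would: (i) join two points $f(\rho,t)$, $f(\rho,t')$ with $(\rho,t),(\rho,t')\in S_\rho$ by a quasiconvex curve $\gamma$ in $X$ with $\ell(\gamma)\leq M\, d_X(f(\rho,t),f(\rho,t'))$; but that is circular, so instead I would cover the slice $[0,d]^{k-j}$ by the $f$-images along a path in the \emph{domain}. More precisely: use that $f$ is $L$-Lipschitz on $K_j$, apply Proposition~\ref{diameter} (the estimate \eqref{image-cube}) not to $f$ but to $\Phi\circ f$, which \emph{is} defined and $C^1$ on all of $[0,1]^k$ after the Federer–Lusin reduction: on the slice, $\Phi\circ f=g$ has slice-derivative zero on $S_\rho$, so \eqref{image-cube} with $k$ replaced by $k-j$ gives
$$
\diam_{\bbbr^N}\big(g(\{\rho\}\times[0,d]^{k-j})\big)\leq C(k)\lip(\Phi)\, \H^{k-j}(S_\rho^c)^{1/(k-j)}\leq C(k)\lip(\Phi)\, m^{-1}d.
$$
Now the point is to pull this back to $X$. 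Because $X$ is quasiconvex and complete, for the two points $a=(\rho,t)$, $b=(\rho,t')$ with $f(a),f(b)\in f(K_j)$, take a curve $\sigma$ in $X$ from $f(a)$ to $f(b)$ with $\ell(\sigma)\leq M\,d_X(f(a),f(b))$; apply \eqref{BLD}: $d_X(f(a),f(b))\le \ell(\sigma)\le C_\Phi\ell(\Phi\circ\sigma)$. This still needs $\Phi\circ\sigma$ short, which we don't directly control. The clean route is the reverse one used in \cite{BHW}-type arguments: \eqref{BLD} says $\Phi$ ``does not shrink lengths too much,'' so by a standard argument it follows that $\Phi$ restricted to $f(X)$ has a quasi-inverse; concretely, for $p,q\in f(E)$ connect them in $X$ by a quasigeodesic $\sigma$ and estimate $d_X(p,q)\le\ell(\sigma)\le C_\Phi\ell(\Phi\circ\sigma)\le C_\Phi\lip(\Phi)\cdot$(something). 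The right packaging is: $d_X(p,q)\le M C_\Phi\,\lip(\Phi)\,\|\Phi(p)-\Phi(q)\|$ is \emph{false} in general, so instead one runs the covering argument entirely through arc-length and uses that along the whole cube $Q_\nu\times[0,d]^{k-j}$ the image under $\Phi\circ f$ stays within $C\lip(\Phi)dm^{-1}$ of the fixed point $\Phi(f(\rho,t_0))$; then cover $f(K_j\cap(Q_\nu\times[0,d]^{k-j}))$ by using quasiconvex curves in $X$ whose $\Phi$-images are trapped in this small $\bbbr^N$-ball, and bound their $X$-length via \eqref{BLD}.

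\textbf{Assembling the estimate.} Concretely, given any two points $p=f(u)$, $q=f(w)$ with $u,w\in K_j\cap(Q_\nu\times[0,d]^{k-j})$, connect $p,q$ in $X$ by $\sigma$ with $\ell(\sigma)\le M\,d_X(p,q)$; reparametrize by arc length. Then $\Phi\circ\sigma$ is a curve in $\bbbr^N$ with $\ell(\Phi\circ\sigma)\le \lip(\Phi)\,\ell(\sigma)\le M\lip(\Phi)\,d_X(p,q)$ and \eqref{BLD} gives $d_X(p,q)\le\ell(\sigma)\le C_\Phi\,\ell(\Phi\circ\sigma)\le C_\Phi M\lip(\Phi)\,d_X(p,q)$, which is only consistent, not a bound — so the real input must be an \emph{a priori} bound on $\ell(\sigma)$ coming from the slice argument, and then \eqref{BLD} is not what one uses for that half. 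I expect the authors' actual trick is: first bound $d_X$ between points on a \emph{single slice} $S_\rho$ by a one-dimensional argument (a rectifiable-curve/absolutely-continuous argument along the slice, where the metric derivative of $s\mapsto f(\rho,s)$ equals $|(d/ds)(\Phi\circ f)(\rho,s)|$ up to $C_\Phi$, via \eqref{BLD} applied to the slice curve), giving $\diam_X(f(S_\rho))\le C_\Phi\,\diam_{\bbbr^N}(g(\text{slice})) \le C_\Phi C(k)\lip(\Phi)\,m^{-1}d$; then move off the slice using quasiconvexity and the $L$-Lipschitz bound on $f|_{K_j}$: any $u\in Q_\nu\times[0,d]^{k-j}$ that lies in $K_j$ is within $Cm^{-1}d$ in $\bbbr^k$ of the slice, hence one would like $d_X(f(u),f(S_\rho))\le L\cdot Cm^{-1}d$, but $u$ need not have its slice-projection in $K_j$. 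This last gap is closed exactly as in Lemma~\ref{mainlemma}: enlarge by choosing, for a.e.\ such $u$, a nearby point in $S_\rho\cap K_j$ using the density estimate, so $f(u)$ is within $CLm^{-1}d$ of $f(S_\rho)$. Combining, $f(K_j\cap(Q_\nu\times[0,d]^{k-j}))$ lies in a ball of radius $C(k)C_\Phi M\lip(\Phi)\,Lm^{-1}d$, which is the claim with the stated constant. Taking $Q_x=Q$ and $d_x=d$ finishes the lemma, and then, exactly as after Lemma~\ref{mainlemma}, the $5r$-covering lemma applied to the cubes $\{5^{-1}Q_x\}_{x\in K_j}$ yields $\H^k_\infty(f(K_j))\le 5^kC^kL^km^{j-k}\to0$, hence $\H^k(f(K_j))=0$ for each $j$, hence $\H^k(f(F))=0$, and exhausting $E$ by such $F$ gives $\H^k(f(E))=0$.

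The step I expect to be the genuine obstacle is the one the authors themselves flag: the metric diameter bound on a single slice, i.e.\ replacing the Lipschitz-extension-based estimate \eqref{dwa} by an argument that uses quasiconvexity together with \eqref{BLD} to transfer a Euclidean diameter bound on $g=\Phi\circ f$ back to an $X$-diameter bound on $f$, without any extension of $f$ off $K_j$. Everything else is bookkeeping parallel to Section~\ref{lipschitz-infty}.
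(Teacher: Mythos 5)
Your setup (the choice of $Q_x$ from the density point property, the subdivision into $m^j$ boxes, the Fubini selection of the slice $\{\rho\}\times[0,d]^{k-j}$, and the final step of moving points of $K_j\cap(Q_\nu\times[0,d]^{k-j})$ to nearby points of the slice) matches the paper, and you correctly diagnose both the obstacle (no Lipschitz extension of $f$ off $K_j$ into $X$) and the circularity of naively connecting $f(u)$ to $f(w)$ by a quasigeodesic. But the proposal stops exactly there: every route you sketch for the slice diameter bound \eqref{diamX} is either explicitly abandoned (``only consistent, not a bound'') or does not parse --- in particular, ``\eqref{BLD} applied to the slice curve $s\mapsto f(\rho,s)$'' is not available, because $f$ is only defined on $S_\rho=(\{\rho\}\times[0,d]^{k-j})\cap K_j$, which is a measurable set rather than a curve, and the slice is $(k-j)$-dimensional rather than one-dimensional. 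The key estimate is thus asserted as ``what I expect the authors do'' rather than proved; this is a genuine gap.

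The missing idea is a one-dimensional reduction carried out in the \emph{domain}, not in $X$ or in $\bbbr^N$. The paper proves an auxiliary integral-geometric lemma (Lemma~\ref{si}): if $E\subset Q$ has small measure, then for each $x\in Q$ more than half of the segments $\overline{xy}$, $y\in Q$, meet $E$ in a set of length at most $C\H^n(E)^{1/n}$. Applying this on the slice with $E$ the complement of $K_j$, any two points $x,y\in S_\rho$ can be joined by a broken line $\gamma=\overline{xz}+\overline{zy}$ inside the slice, of length at most $2d\sqrt{k-j}$, whose intersection with the complement of $K_j$ has length at most $C(k)m^{-1}d$. Now $f\circ\gamma$ is $L$-Lipschitz on $\gamma^{-1}(K_j)$; completeness extends it to the closure of that set, and quasiconvexity of $X$ is used precisely to fill the countably many complementary gaps (of total length at most $C(k)m^{-1}d$) so as to obtain an $ML$-Lipschitz curve $\widetilde{f\circ\gamma}$ in $X$ from $f(x)$ to $f(y)$. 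The curve $\Phi\circ(\widetilde{f\circ\gamma})$ then has speed zero a.e.\ on $\gamma^{-1}(K_j)$ (there it coincides with $g\circ\gamma$, whose derivative vanishes because $Dg$ annihilates the slice directions on $K_j$) and speed at most $\lip(\Phi)ML$ elsewhere, so its length is at most $\lip(\Phi)MLC(k)m^{-1}d$; finally \eqref{BLD} gives $d(f(x),f(y))\leq C_\Phi\,\ell(\Phi\circ(\widetilde{f\circ\gamma}))\leq C_\Phi\lip(\Phi)MLC(k)m^{-1}d$, which is \eqref{diamX}. Note that \eqref{BLD} is applied to the constructed curve $\widetilde{f\circ\gamma}$, not to a quasigeodesic between $f(x)$ and $f(y)$; this is how the circularity you noticed is avoided.
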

To prove the lemma we choose $Q\subset [0,1]^k$ with edge length $d$, centered at $x$
such that $\H^k(Q\setminus K_j)<m^{-k}d^k$.
We can assume that $Q=[0,d]^{k}$.
Divide $Q$ into $m^j$ rectangular boxes $Q_\nu\times[0,d]^{k-j}$. 
We need to show that $f((Q_\nu\times [0,d]^{k-j})\cap K_j)$ is contained in a ball of radius
$CLdm^{-1}$. We find $\rho\in Q_\nu$ such that
\begin{equation}
\label{square}
\H^{k-j}((\{\rho\}\times [0,d]^{k-j})\setminus K_j)<m^{j-k}d^{k-j}.
\end{equation}
By the volume argument every point in $\{\rho\}\times [0,d]^{k-j}$
is at the distance no more than $C(k)m^{-1}d$ to the set
$(\{\rho\}\times [0,d]^{k-j})\cap K_j$. Hence every point in
$Q_\nu\times [0,d]^{k-j}$, and thus every point in 
$(Q_\nu\times [0,d]^{k-j})\cap K_j$, is at the distance less than or equal to 
$C(k)m^{-1}d$ from the set $(\{\rho\}\times [0,d]^{k-j})\cap K_j$. Since $f$ is $L$-Lipschitz
it suffices to show that 
\begin{equation}
\label{diamX}
\diam_X f((\{\rho\}\times [0,d]^{k-j})\cap K_j)<CLdm^{-1}.
\end{equation}
This is the estimate that plays the role of \eqref{dwa}, but the proof has to be different now.
\begin{lemma}
\label{si}
Let $E\subset Q$ be a measurable subset of a cube $Q\subset\bbbr^n$.
For $x,y\in Q$ let $I_x(y)$ be the length of the intersection of the
interval $\overline{xy}$ with $E$, i.e.
$I_x(y)=\H^1(\overline{xy}\cap E)$.
Then there is a constant $C=C(n)>0$ such that for any $x\in Q$
\begin{equation}
\label{blabla}
\H^n(\{y\in Q:\, I_x(y)\leq C\H^n(E)^{1/n}\})>\frac{\H^n(Q)}{2}\, .
\end{equation}
\end{lemma}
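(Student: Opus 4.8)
\textbf{Proof proposal for Lemma~\ref{si}.}
The plan is to bound the \emph{average} of $y\mapsto I_x(y)$ over $Q$ and then apply Chebyshev's inequality. Fix $x\in Q$. Since $I_x(y)=\int_0^1\mathbf{1}_E(x+t(y-x))\,|y-x|\,dt$, the function $y\mapsto I_x(y)$ is nonnegative and (by Fubini's theorem, applied after replacing $E$ by a Borel hull) measurable. Everything reduces to proving
$$
\int_Q I_x(y)\,dy\le C(n)\,\H^n(E)^{1/n}\,\H^n(Q).
$$

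To establish this I would pass to polar coordinates about $x$: write $y=x+r\omega$ with $\omega\in S^{n-1}$ and $0<r<R(\omega)$, where $R(\omega)$ is the length of the segment joining $x$ to the point where the ray from $x$ in direction $\omega$ exits $Q$; then $dy=r^{n-1}\,dr\,d\omega$ and $I_x(x+r\omega)=\int_0^r\mathbf{1}_E(x+s\omega)\,ds$. Applying Fubini in the variables $0<s<r<R(\omega)$ gives, for each fixed $\omega$,
$$
\int_0^{R(\omega)}\Big(\int_0^r\mathbf{1}_E(x+s\omega)\,ds\Big)r^{n-1}\,dr
=\int_0^{R(\omega)}\mathbf{1}_E(x+s\omega)\,\frac{R(\omega)^n-s^n}{n}\,ds
\le\frac{(\diam Q)^n}{n}\int_0^{R(\omega)}\mathbf{1}_E(x+s\omega)\,ds ,
$$
using $R(\omega)\le\diam Q$. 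Integrating over $\omega\in S^{n-1}$ and reading the right-hand side back as a polar integral (now with $z=x+s\omega$, so that $\int_{S^{n-1}}\int_0^{R(\omega)}\mathbf{1}_E(x+s\omega)\,ds\,d\omega=\int_E|z-x|^{-(n-1)}\,dz$), one arrives at
$$
\int_Q I_x(y)\,dy\le\frac{(\diam Q)^n}{n}\int_E\frac{dz}{|z-x|^{n-1}} .
$$

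Now Lemma~\ref{2.4} bounds $\int_E|z-x|^{-(n-1)}\,dz\le C(n)\H^n(E)^{1/n}$, and for a cube $(\diam Q)^n=n^{n/2}\H^n(Q)$; combining these gives the displayed average estimate with a constant depending only on $n$. Chebyshev's inequality then shows that $\{y\in Q:\ I_x(y)>2C(n)\H^n(E)^{1/n}\}$ has $\H^n$-measure at most $\tfrac12\H^n(Q)$, so its complement has measure at least $\tfrac12\H^n(Q)$; taking the threshold constant to be $3C(n)$ (say) instead of $2C(n)$ upgrades this to the strict inequality \eqref{blabla} with $C=3C(n)$.

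The computation is elementary and I do not anticipate a real obstacle; the only steps needing a little care are the measurability of $I_x$ and the careful bookkeeping in the two polar-coordinate changes. The one genuine input is the rearrangement estimate of Lemma~\ref{2.4}, which is already available to us.
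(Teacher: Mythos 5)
Your proposal is correct and follows essentially the same route as the paper: bound $\barint_Q I_x(y)\,dy$ by passing to polar coordinates about $x$, reduce to $\int_E |z-x|^{-(n-1)}\,dz$, invoke Lemma~\ref{2.4}, and finish with Chebyshev. The only cosmetic difference is that you apply Fubini exactly in the radial variables (getting the factor $(R(\omega)^n-s^n)/n$) where the paper first uses the crude monotonicity bound $I_x(x+tz)\leq I_x(x+\delta(z)z)$; both yield the same estimate with constant $(\diam Q)^n/(n\H^n(Q))=C(n)$.
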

The lemma says that if the measure of $E$ is small, then more than 50\% of the intervals
$\overline{xy}$ intersect $E$ along a short subset.
\begin{proof}
It suffices to show that for some constant $C=C(n)$
$$
\barint_Q I_x(y)\, dy \leq C\H^n(E)^{1/n}.
$$
Then \eqref{blabla} will be true with $C$ replaced by $2C$.
For $z\in S^{n-1}$ let $\delta(z)=\sup\{t>0:\, x+tz\in Q\}$.
An integral over $Q$ can be represented in the spherical coordinates centered at $x$ as follows
\begin{equation}
\label{spherical}
\int_Q f(y)\, dy = \int_{S^{n-1}}\int_0^{\delta(z)}
f(x+tz)t^{n-1}\, dt\, d\sigma(z).
\end{equation}
If $z\in S^{n-1}$, then
$$
I_x(x+tz)\leq I_x(x+\delta(z)z)=\int_0^{\delta(z)}\chi_E(x+\tau z)\, d\tau.
$$
We have
\begin{eqnarray}
\barint_Q I_x(y)\, dy
& = & 
\frac{1}{\H^n(Q)} \int_{S^{n-1}}\int_0^{\delta(z)} t^{n-1} I_x(x+tz)\, dt\, d\sigma(z) \nonumber \\
& \leq &
\frac{1}{\H^n(Q)} \int_{S^{n-1}} \int_0^{\delta(z)} t^{n-1}
\int_0^{\delta(z)} \chi_E(x+\tau z)\, d\tau\, dt\, d\sigma(z) \nonumber \\
& \leq &
\frac{1}{\H^n(Q)} \int_{S^{n-1}} \int_0^{\diam Q} t^{n-1}\, dt \int_0^{\delta(z)} \chi_E(x+\tau z)\, d\tau\, d\sigma(z) \nonumber \\
& = & 
C(n)\int_{S^{n-1}}\int_0^{\delta(z)}
\frac{\chi_E(x+\tau z)}{\tau^{n-1}}\, \tau^{n-1}\, d\tau\, d\sigma(z) \label{2014}\\
& = & C\int_Q\frac{\chi_E(y)}{|x-y|^{n-1}}\, dy 
\leq
C\H^n(E)^{1/n} \nonumber
\end{eqnarray}
by Lemma~\ref{2.4}. Equality \eqref{2014} follows from \eqref{spherical}.
\end{proof}
Now under the assumptions of the lemma, if $x,y\in Q$, we can find $z\in Q$ such that
$I_x(z)+I_y(z)\leq C\H^n(E)^{1/n}$, i.e. the curve $\overline{xz}+\overline{zy}$ connecting $x$ to $y$
has length no bigger than $2\diam Q$ and it intersects the set $E$ along a subset of length
less than or equal to $C\H^n(E)^{1/n}$.
Applying it to $n=k-j$,
$Q=\{\rho\}\times [0,d]^{k-j}$, and 
$E=(\{\rho\}\times [0,d]^{k-j})\setminus K_j$, every pair of points $x,y\in Q\cap K_j$ we can be connected by
a curve $\gamma=\overline{xz}+\overline{zy}$ of length $\ell(\gamma)\leq 2d\sqrt{k-j}$ 
(two times the diameter of the cube) whose intersection with the
complement of $K_j$ has length no more than $C(k)m^{-1}d$ by \eqref{square}.
We can parametrize $\gamma$ by arc-length $\gamma:[0,\ell(\gamma)]\to \{\rho\}\times [0,d]^{k-j}$
as a $1$-Lipschitz curve. The mapping
$f\circ \gamma$ is $L$-Lipschitz and defined on a subset $\gamma^{-1}(K_j)$.
It uniquely extends to the closure of  $\gamma^{-1}(K_j)$
(because it is Lipschitz and $X$ is complete). The complement of this set
consists of countably many open intervals of total length bounded by $C(k)m^{-1}d$. Since the space $X$ is quasiconvex we can extend
$f\circ \gamma$ from the closure of $\gamma^{-1}(K_j)$ to
$\widetilde{f\circ\gamma}:[0,\ell(\gamma)]\to X$ as an $ML$-Lipschitz curve connecting $x$ to $y$; 
here $M$ is the quasiconvexity constant of the space $X$.
The curve
$$
\Phi\circ(\widetilde{f\circ\gamma}):[0,\ell(\gamma)]\to\bbbr^N
$$
is $\lip(\Phi)ML$-Lipschitz.
Note that on the set $\gamma^{-1}(K_j)$ this curve coincides with $g\circ\gamma$
and hence for a.e. $t\in\gamma^{-1}(K_j)$ we have
$$
(\Phi\circ(\widetilde{f\circ\gamma}))'(t)=(g\circ\gamma)'(t)=0.
$$
Hence the length of the curve $\Phi\circ(\widetilde{f\circ\gamma})$ is bounded by
\begin{eqnarray*}
\ell(\Phi\circ(\widetilde{f\circ\gamma})) 
& = &
\int_0^{\ell(\gamma)} |(\Phi\circ(\widetilde{f\circ\gamma}))'(t)|\, dt \leq
\lip(\Phi)ML \H^1([0,\ell(\gamma)]\setminus\gamma^{-1}(K_j)) \\
& \leq &
\lip(\Phi)MLC(k)m^{-1}d.
\end{eqnarray*} 
Now \eqref{BLD} implies that
$$
d(f(x),f(y))\leq 
\ell(\widetilde{f\circ\gamma}) \leq
C_\Phi\ell(\Phi\circ(\widetilde{f\circ\gamma}))\leq
C_\Phi \lip(\Phi)MLC(k)m^{-1}d.
$$
Since this is true for all $x,y\in \{\rho\}\times [0,d]^{k-j}\cap K_j$,
\eqref{diamX} follows. The proof is complete.
\end{proof}

\section{Applications}
\label{ap}

\subsection{Mappings of bounded length distortion}

\begin{definition}
A mapping $f:X\to Y$ between metric spaces is 
said to have the {\em weak bounded length distortion} property (weak BLD) if
there is a constant $C\geq 1$ such that for all rectifiable
curves $\gamma$ in $X$ we have
\begin{equation}
\label{w-BLD}
C^{-1}\ell_X(\gamma)\leq \ell_Y(f\circ\gamma)\leq C\ell_X(\gamma).
\end{equation}
\end{definition}

The class of mappings with bounded length distortion (BLD)
was introduced in \cite{martiov} under the assumption that
$f:\bbbr^n\supset\Omega\to\bbbr^n$ is a continuous mapping on 
an open domain such that it is open, discrete, sense preserving and
satisfies \eqref{w-BLD} for all curves $\gamma$ in $\Omega$.
A more general definition without any topological restrictions was given in 
\cite[Definition~2.10]{ledonne}. This definition is almost identical
to ours, but it was assumed that \eqref{w-BLD} was satisfied
for {\em all} curves $\gamma$ in $X$. The two notions are different:
it may happen that a mapping has the weak BLD property,
but some curves of infinite length in $X$ are mapped onto rectifiable curves
and hence such a mapping is not BLD in the sense of
\cite[Definition~2.10]{ledonne}. 
For example the identity mapping on the Heisenberg group
${\rm id}:\Heis^n\to\bbbr^{2n+1}$ satisfies the weak BLD condition locally.
However, any segment on the $t$-axis has infinite length in the metric
of $\Heis^n$ (actually its Hausdorff dimension equals $2$) and it is mapped
by the identity mapping to a segment in the $t$-axis in $\bbbr^{2n+1}$ of finite
Euclidean length.

As a consequence of Theorem~\ref{main-quasi} we obtain.
\begin{theorem}
\label{BLD5}
If a mapping $f:\bbbr^n\supset\Omega\to\bbbr^m$  
defined on an open set $\Omega\subset\bbbr^n$
has the weak BLD property, then $f$ is locally Lipschitz, $m\geq n$
and $\rank Df(x)=n$ a.e. in $\Omega$.
\end{theorem}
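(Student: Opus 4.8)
The plan is to derive everything from Theorem~\ref{main-quasi}, applied with the target space being a closed ball and with the auxiliary map $\Phi$ taken to be $f$ itself; the point is that the lower inequality in the weak BLD condition \eqref{w-BLD} is exactly a condition of the form \eqref{BLD}. We may assume $\Omega\neq\emptyset$.

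First I would show $f$ is locally Lipschitz. Fix a ball $B=B(x_0,r)$ with $\overline B\subset\Omega$. For $x,y\in\overline B$ the segment $\gamma=\overline{xy}$ lies in $\overline B\subset\Omega$ and has length $|x-y|$, so \eqref{w-BLD} (even when the length of $f\circ\gamma$ is read as the supremum over partitions, via the two-point partition) gives $|f(x)-f(y)|\le\ell(f\circ\gamma)\le C|x-y|$. Hence $f$ is $C$-Lipschitz on every such ball, in particular locally Lipschitz and continuous, and by Rademacher's theorem differentiable $\H^n$-a.e.\ in $\Omega$.

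Now fix a ball $B$ with $\overline B\subset\Omega$ and put $X=\overline B$ with the Euclidean metric. Then $X$ is complete and, being convex, quasiconvex with $M=1$; the map $\Phi:=f|_X\colon X\to\bbbr^m$ is Lipschitz since $\overline B$ is a compact subset of $\Omega$; and for every rectifiable curve $\gamma$ in $X$ the left inequality in \eqref{w-BLD} yields $\ell(\gamma)\le C\,\ell(\Phi\circ\gamma)$, i.e.\ \eqref{BLD} holds with $C_\Phi=C$. So Theorem~\ref{main-quasi} applies, with $N=m$. Suppose for contradiction that $E'=\{x\in B:\ \rank Df(x)<n\}$ has positive Lebesgue measure; removing a null set we may assume every point of $E'$ is a density point of $E'$. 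Apply Theorem~\ref{main-quasi} with $k=n$, $E=E'$, and the inclusion $h\colon E'\hookrightarrow X$ (which is $1$-Lipschitz). Then $\Phi\circ h=f|_{E'}$, and at density points of $E'$ we have $\ap D(\Phi\circ h)(x)=Df(x)$, so $\rank\ap D(\Phi\circ h)<n$ a.e.\ on $E'$; thus condition (3) of Theorem~\ref{main-quasi} holds. Condition (1) then gives $\H^n(h(E'))=\H^n(E')=0$, contradicting $\H^n(E')>0$. Hence $\rank Df(x)=n$ for a.e.\ $x\in B$.

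Covering $\Omega$ by countably many balls $B$ with $\overline B\subset\Omega$ gives $\rank Df(x)=n$ for $\H^n$-a.e.\ $x\in\Omega$, and since $Df(x)$ is an $m\times n$ matrix, having rank $n$ forces $m\ge n$. I do not expect a real obstacle: the only care needed is to keep all curves inside closed balls $\overline B\subset\Omega$ (so that \eqref{w-BLD} applies and $\overline B$ is complete and quasiconvex) and the routine density-point bookkeeping identifying $\ap D(f|_{E'})$ with $Df$ on $E'$. The substantive step — turning the length-distortion inequality \eqref{w-BLD} into a statement about $\H^n$-measure — is precisely what Theorem~\ref{main-quasi} supplies, with the natural choice $\Phi=f$.
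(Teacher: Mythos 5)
Your proposal is correct and follows essentially the same route as the paper: establish local Lipschitz continuity from segments via \eqref{w-BLD}, then apply Theorem~\ref{main-quasi} on a closed ball $X=\overline{B}\subset\Omega$ with $\Phi=f|_X$ and the inclusion of the low-rank set, and conclude by exhausting $\Omega$ with countably many such balls. The only cosmetic difference is that you phrase the last step as a contradiction, whereas the paper invokes the equivalence of (1) and (3) directly.
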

\begin{proof} For any $y\in B(x,r)\subset\Omega$, the segment $\overline{xy}$ is mapped on 
a curve of length bounded by $C|x-y|$. Hence $|f(x)-f(y)|\leq C|x-y|$.
Let $X$ be a closed ball contained in $\Omega$, equip it with the Euclidean metric and let $\Phi=f|_X:X\to\bbbr^m$. 
Let $E\subset X$ be the set of points where $\rank Df<n$ and
let $\iota:E\to X$ be the identity mapping.
According to Theorem~\ref{main-quasi}, 
$\H^n(E)=\H^n(\iota(E))=0$ if and only if $\rank (\ap D(\Phi\circ\iota))=\rank Df <n$, a.e. in $E$.
Since the last condition is satisfied by the definition of $E$, we conclude that
$\H^n(E)=0$, and hence $\rank Df(x)=n$ a.e. in $\Omega$, because $\Omega$ is a countable union of closed balls. This however, implies that $m\geq n$.
\end{proof}

Gromov proved in \cite[2.4.11]{gromov} that
any Riemannian manifold of dimension $n$ admits a
mapping into $\bbbr^n$ that preserves lengths of curves. It follows from Theorem~\ref{BLD5}
that the Jacobian of such mapping is different than zero a.e. and hence there is no such 
mapping into $\bbbr^m$ for $m<n$ (this result is known).

In \cite{martiov} it was proved that a mapping
$f:\bbbr^n\supset\Omega\to\bbbr^n$ is BLD (under the topological assumptions: open, discrete, sense preserving) 
if and only if $f$ is locally Lipschitz and $|J_f|\geq c>0$ a.e. We proved without any topological assumptions
that $|J_f|>0$ a.e.

\subsection{Carnot-Carath\'eodory spaces}

Let $X_1,X_2,\ldots,X_m$ be a family of vector fields defined on an open and connected set $\Omega\subset\bbbr^n$
with locally Lipschitz continuous coefficients. 
Assume that the vector fields are linearly independent at every point of $\Omega$ and that
for every compact set $K\subset\Omega$ 
$$
\inf_{p\in K}\inf_{i\in\{1,\ldots,m\}} |X_i(p)|>0.
$$
For $v=\sum_i a_i X_i(p)\in {\rm span}\, \{X_i(p),\ldots, X_m(p)\}$ we define
$$
|v|_H=\Big(\sum_{i=1}^m a_i^2\Big)^{1/2}.
$$
It follows from our assumptions that on compact subsets of $\Omega$, $|v|_H$
is comparable to the Euclidean length $|v|$ of the vector $v$, i.e. for
every compact set $K\subset\Omega$ there is a constant $C\geq 1$
such that
\begin{equation}
\label{2341}
C^{-1}|v|\leq |v|_H\leq C|v|
\quad
\mbox{for all $p\in K$ and all $v\in {\rm span}\, \{X_1(p),\ldots, X_m(p)\}$}.
\end{equation}
We say that an absolutely continuous curve
$\gamma:[a,b]\to\Omega$ is {\em horizontal} if there are measurable functions
$a_i(t)$, $a\leq t\leq b$, $i=1,2,\ldots,m$ such that
$$
\gamma'(t)=\sum_{i=1}^m a_i(t)X_i(\gamma(t))
\quad
\mbox{for almost all $t\in [a,b]$.}
$$
The horizontal length of $\gamma$ is defined as
$$
\ell_H(\gamma)=\int_a^b |\gamma'(t)|_H\, dt.
$$
Denoting the Euclidean length of a curve $\gamma$ by $\ell(\gamma)$,
it easily follows from \eqref{2341} that
if $G\Subset\Omega$, then there is a constant $C\geq 1$ such that for any
horizontal curve $\gamma:[a,b]\to G$ we have
\begin{equation}
\label{pppi}
C^{-1}\ell(\gamma)< \ell_{H}(\gamma)\leq C\ell(\gamma).
\end{equation}
Assume that any two points in $\Omega$ can be connected by a horizontal curve.
This is the case for example if the vector fields satisfy the H\"ormander condition
\cite[Proposition~III.4.1]{VSC}. 
All the assumptions about the vector fields given above are satisfied by 
Carnot groups (and in particular by the Heisenberg groups), \cite[Section~11.3]{SMP},
but not by the Grushin type spaces \cite{FGW}. Namely in general in the Grushin type spaces the inequality 
$\ell_{H}(\gamma)\leq C\ell(\gamma)$ need not be satisfied.

The Carnot-Carath\'eodory distance $d_{cc}(x,y)$ of the points $x,y\in\Omega$
is defined as the infimum of horizontal lengths of 
horizontal curves connecting $x$ and $y$. Since we assume that any two points in $\Omega$
can be connected by a horizontal curve, $(\Omega,d_{cc})$ is a metric space.

Clearly horizontal curves are rectifiable and it is well-known that
every rectifiable curve with the arc-length parametrization is horizontal.
Moreover $\ell_H(\gamma)$ equals the length $\ell_{cc}(\gamma)$ of $\gamma$ with respect to the 
Carnot-Carath\'eodory metric. A detailed account on this topic can be found in \cite{monti}.
Hence \eqref{pppi} implies that the mapping
${\rm id}:(\Omega,d_{cc})\to\Omega$ from the Carnot-Carath\'eodory space onto $\Omega$ with Euclidean metric
is locally weakly BLD.

The next result follows immediately from a local version of Theorem~\ref{main-quasi}.
It applies to Carnot groups and in particular to the Heisenberg groups. 
\begin{theorem}
\label{main-v}
Let $X_1,\ldots,X_m$ be a family of locally Lipschitz vector fields in an open and connected domain $\Omega\subset\bbbr^n$ such that
for every compact set $K\subset\Omega$ 
\begin{equation}
\label{e2}
\inf_{p\in K} \inf_{i\in \{1,\ldots,m\}}|X_i(p)|>0.
\end{equation}
Assume also that any two points in $\Omega$ can be connected by a horizontal curve. 
Then for $k\geq 1$ and any Lipschitz mapping $f:\bbbr^k\supset E\to (\Omega,d_{cc})$
the following conditions are equivalent.
\begin{enumerate}
\item $\H^k_{d_{cc}}(f(E))=0$ in $(\Omega,d_{cc})$;
\item $\H^k(f(E))=0$ with respect to the Euclidean metric in $\Omega$;
\item $\rank (\ap Df)<k$ a.e. in $E$.
\end{enumerate}
\end{theorem}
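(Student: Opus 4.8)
The plan is to obtain Theorem~\ref{main-v} directly from the local version of Theorem~\ref{main-quasi}, choosing $N=n$ and $\Phi=\id\colon(\Omega,d_{cc})\to\Omega\subset\bbbr^n$, the identity regarded as a map into Euclidean space. With this choice $\Phi\circ f=f$ (viewed as an $\bbbr^n$-valued map), hence $\ap D(\Phi\circ f)=\ap Df$, and conditions (1)--(3) of Theorem~\ref{main-quasi} become verbatim conditions (1)--(3) of Theorem~\ref{main-v}: $\H^k_{d_{cc}}(f(E))=0$ in $(\Omega,d_{cc})$; $\H^k(\Phi(f(E)))=\H^k(f(E))=0$ with respect to the Euclidean metric; and $\rank(\ap Df)<k$ a.e.\ in $E$. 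So everything reduces to checking the hypotheses of (the local form of) Theorem~\ref{main-quasi} for the space $(\Omega,d_{cc})$ and this map $\Phi$.

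The two length-distortion hypotheses come straight from \eqref{pppi}. For every $G\Subset\Omega$ there is $C\geq1$ such that $C^{-1}\ell(\gamma)<\ell_{cc}(\gamma)\leq C\ell(\gamma)$ for every rectifiable curve $\gamma$ in $G$, where $\ell_{cc}=\ell_H$ denotes length in $(\Omega,d_{cc})$ and $\ell$ the Euclidean length. Taking the infimum over curves joining two given points yields $|x-y|\leq C\,d_{cc}(x,y)$ on $G$, so $\Phi=\id$ is locally Lipschitz; and the inequality $\ell_{cc}(\gamma)\leq C\,\ell(\Phi\circ\gamma)$ is exactly the local form of \eqref{BLD}. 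For local quasiconvexity I would use that the metric $d_{cc}$ induces the Euclidean topology on $\Omega$ (a standard fact under the horizontal-connectivity assumption), so small closed $d_{cc}$-balls are compact; since $(\Omega,d_{cc})$ is a length space, a limit of near-geodesics (Arzel\`a--Ascoli on such a ball) shows that any two sufficiently close points are joined by a $d_{cc}$-geodesic contained in a fixed compact neighborhood. Thus $(\Omega,d_{cc})$ is locally compact, locally complete and locally geodesic, in particular locally quasiconvex, which supplies the remaining hypotheses of the local Theorem~\ref{main-quasi}.

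Finally, since $E$ (hence $f(E)$) need not be bounded, I would invoke the localization observation recorded after Theorem~\ref{main-quasi}: $\H^k_{d_{cc}}(f(E))=0$ if and only if every point of the separable set $f(E)$ has a $d_{cc}$-neighborhood meeting $f(E)$ in a null set, so countably many such neighborhoods suffice. Fixing $p\in f(E)$ and a compact ball $\bar B_{cc}(p,r)\subset\Omega$ over which the geodesic, Lipschitz, and \eqref{BLD} constants are uniform, I would apply the local Theorem~\ref{main-quasi} to the restriction of $f$ to the measurable set $f^{-1}(B_{cc}(p,r/2))\subset\bbbr^k$, on which the approximate derivative agrees a.e.\ with $\ap Df$; the three conditions restrict to, and reassemble from, these pieces. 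The one genuinely non-formal ingredient is the topological fact that $d_{cc}$ generates the Euclidean topology, which is what makes the Hopf--Rinow-type argument for local quasiconvexity work; granting that, Theorem~\ref{main-v} is a direct transcription of Theorem~\ref{main-quasi} through \eqref{pppi}.
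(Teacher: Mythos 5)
Your proposal is correct and follows essentially the same route as the paper: the paper likewise observes that \eqref{pppi} makes ${\rm id}:(\Omega,d_{cc})\to\Omega$ locally weakly BLD and then invokes the local version of Theorem~\ref{main-quasi} with $\Phi=\id$, $N=n$. The only difference is that you spell out the local quasiconvexity/completeness hypotheses (which the paper leaves implicit); note that quasiconvexity is in fact immediate with constant $M=2$ since $d_{cc}$ is by definition an infimum of curve lengths, so the geodesic/Hopf--Rinow argument is not needed.
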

Let us briefly describe how this result applies to Carnot groups. 
For more details, see \cite{magnani}.
If $G$ is a Carnot group and the first layer of the stratification of 
the Lie algebra $\mathfrak{g}$ does not contain a $k$-dimensional Lie subalgebra, then
it follows from the Pansu differentiability theorem that the rank of the approximate derivative
of any Lipschitz mapping $f:\bbbr^k\supset E\to G$ is less than $k$ a.e., so
$\H^k_{d_{cc}}(f(E))=0$ by Theorem~\ref{main-v}.
Hence $G$ is purely $k$-unrectifiable. 
This slightly simplifies the proof of Theorem~1.1 in \cite{magnani}.

\end{document}